\numberwithin{equation}{section}
\newtheorem{definition}[equation]{Definition}
\newtheorem{theorem}[equation]{Theorem}
\newtheorem{proposition}[equation]{Proposition}
\newtheorem{corollary}[equation]{Corollary}
\newtheorem{example}[equation]{Example}
\newtheorem{remark}[equation]{Remark}
\newtheorem{conjecture}[equation]{Conjecture}
 \newtheoremstyle{exercises}
  {3pt}
  {6pt}
  {}
  {}
  {\bfseries}
  {:}
  {\newline}
   {}
\theoremstyle{exercises}
\newtheorem{exercises}[equation]{Exercises}
\newcommand{\exerset}[2][{}]{
\begin{exercises}{#1}
\exersetmiddle{#2}
\end{exercises}
}
\newcommand{\exersetmiddle}[1]
{
\begin{list}{\arabic{enumi}.}
{\usecounter{enumi}\exerfuss}
{#1}\end{list}
\vskip 10pt
}
\newcommand{\exerfuss}{
\setlength{\itemsep}{0pt}
\setlength{\topsep}{-10pt} 
\setlength{\leftmargin}{0pt}  
\setlength{\labelwidth}{1em}
\setlength{\labelsep}{0.6em}
\setlength{\itemindent}{1.6em}
}
\newcommand{\Cbox}[2]{\parbox{#1}{\begin{center}{#2}\end{center}}}
\def\a{{\alpha}}
\def\b{\beta}
\def\d{\delta}
\def\e{\varepsilon}
\def\m{\mu}
\def\w{\omega}
\def\th{\theta}
\def\oa{{\overline \a}}
\def\bP{\mathbb{P}}
\def\bZ{\mathbb{Z}}
\def\cF{\mathcal{F}}
\def\ff{{\mathfrak{f}}}
\def\fg{{\mathfrak{g}}}
\def\fgl{\mathfrak{gl}}
\def\fp{\mathfrak{p}}
\def\fsl{\mathfrak{sl}_{m+1}}
\def\fso{\mathfrak{so}}
\def\fsp{\mathfrak{sp}_{2m}}
\def\ft{\mathfrak{t}}
\def\td{\mathrm{d}}
\def\tdim{\mathrm{dim}}
\def\tFub{\mathrm{Fub}}
\def\tker{\mathrm{ker}}
\def\tmod{\textrm{ mod }}
\def\tId{\textrm{Id}}
\def\tspan{\mathrm{span}}
\def\l{\ell}
\def\op{\oplus}
\def\ot{\otimes}
\def\t{\tilde}
\def\t{\tau}
\def\s{\sigma}
\def\frp#1#2{\frac{\partial{#1}}{\partial{#2}}}
\def\ooo#1#2{\omega^{#1}_{#2}}
\def\oo#1{\omega^{#1}_0}
\def\ep{\epsilon} 
\def\qq#1#2#3{q^{#1}_{{#2} {#3}}}
\def\rr#1#2#3#4{r^{#1}_{{#2} {#3}{#4}}}
\def\cf{\mathcal F}
\def\bcc#1{\BC^{#1}}
\def\trank{\text{rank}}
\def\BC{\mathbb C}\def\BF{\mathbb F}\def\BO{\mathbb O}\def\BS{\mathbb S}\def\BE{\mathbb E}
\def\BR{\mathbb R}
\def\BP{\mathbb P}
\def\pp#1{\mathbb P^{#1}}
\def\fa{\mathfrak a}\def\fr{\mathfrak r}
\def\fgl{\mathfrak g\mathfrak l}
\def\fc{\mathfrak c}
\def\ppp{{\mathbb P}}
\def\pp#1{{\mathbb P}^{#1}}
\def\tdim{\rm dim}
\def\hd{,...,}
\def\ww{\wedge}
\def\upperp{{}^\perp}
\def\inv{{}^{-1}}
\def\cJ{{\mathcal J}}
\def\cF{{\mathcal F}}
\def\cV{{\mathcal V}}
\def\CC{\mathbb C}
\def\BZ{\mathbb Z}
\def\11{\mathbf 1}
\def\PP{\mathbb P}
\def\QQ{\mathbb Q}
\def\fsl{{\mathfrak {sl}}}
\def\fsp{{\mathfrak {sp}}}
\def\fso{{\mathfrak {so}}}
\def\fe{{\mathfrak e}}
\def\ff{{\mathfrak f}}
\def\fg{{\mathfrak g}}
\def\fn{{\mathfrak n}}
\def\fp{{\mathfrak p}}
\def\ft{{\mathfrak t}}
\def\fl{{\mathfrak l}}
\def\l{\lambda}
\def\a{\alpha}
\def\o{\omega}
\def\O{\Omega}
\def\b{\beta}
\def\g{\gamma}
\def\s{\sigma}
\def\d{\delta}
\def\th{\theta}
\def\m{\mu}
\def\up#1{{}^{({#1})}}
\def\e{\varepsilon}
\def\ot{{\mathord{\,\otimes }\,}}
\def\op{{\mathord{\,\oplus }\,}}
\def\lra{{\mathord{\;\longrightarrow\;}}}
\def\ra{{\mathord{\;\rightarrow\;}}}
\def\we{{\mathord{{\scriptstyle \wedge}}}}
\def\La#1{\Lambda^{#1}}
\def\tim{\text{Image}\,}
\def\tdim{\text{dim}\,}
\def\tcodim{\text{codim}\,}
\def\tker{\text{ker}\,}
\def\tspan{\text{span}\,}
\def\tmod{\text{ mod }}
\def\trank{\text{rank}\,}
\def\be{\begin{equation}}
\def\ene{\end{equation}}
\def\lra{\ \Longrightarrow\ }
\def\gp#1{\fg^{\perp}_{#1}}
\begin{document}
\title{Exterior differential systems, Lie algebra cohomology, and the rigidity of homogenous varieties}
\author{J.M. Landsberg}
\date{\today}
\begin{abstract}These are expository notes from the 2008 Srni Winter School.
They have two purposes: (1)  to give a quick introduction
to exterior differential systems (EDS), which is a collection of techniques
for determining local existence to systems of partial differential equations,
and (2) to give an exposition of recent work (joint with C. Robles)
on the study of the Fubini-Griffiths-Harris rigidity of rational homogeneous
varieties, which also involves an advance in the EDS technology.
\end{abstract}
\maketitle
 
\renewcommand{\thefootnote}{\arabic{footnote}}

\tableofcontents

\section{Introduction}\label{intro}

Let $G$ be a complex semi-simple Lie (or algebraic) group, and let 
$V=V_{\l}$ be an irreducible $G$-module.
The homogeneous variety $G/P= G.[v_{\l}]\subset \BP V$  
is the orbit of a highest weight line.

For example, let $W$ be a complex vector space, $V=\La k W$ and
let $G=SL(W)$,
then $G/P=G(k,W)\subset \BP(\La k W)$ is the {\it Grassmannian} of
$k$-planes through the origin in $W$ in its Plucker embedding.

A long term program with my collaborators Laurent Manivel, Colleen Robles
and Jerzy Weyman is to study
  relations  between the   projective geometry of $G/P\subset \BP V$,
especially its local differential geometry, and the representation theory
of $G$. More than just the geometry of $G/P$, we are interested in the
geometry of its auxiliary varieties, for example the {\it tangential variety}
$\t(G/P)\subset \BP V$, which is the union of  all points on all embedded
tangent lines to $G/P$, and the {\it $r$-th secant variety} of $G/P$,
$\s_r(G/P)\subset \BP V$, which is the Zariski closure of
all points on all secant $\pp{r-1}$'s to $G/P$.
The auxilary varieties are all $G$-varieties, i.e., preserved
under the action of $G$, and thus one can study their ideals, coordinate
rings, etc.  as $G$-modules.

\subsection{Overview}
These notes are focused on the local projective differential geometry of
homogeneously embedded rational homogeneous varieties $G/P\subset \BP V$.
Specifically,
they address the question
how much of the local geometry is needed to recover $G/P$.
We begin by describing many examples of rational homogeneous varieties in
\S\ref{ratexsect}. The main question we deal with is rigidity,
but before discussing rigidity questions, we give 
  descriptions of related projects   in \S\ref{otherprojects} to  give   context to
this work.
  The rigidity results and questions are described in
\S\ref{projgeomsect}. In \S\ref{pdeedssect} and \S\ref{cartanalgsect} we give a crash
course on {\it exterior differential systems} (EDS).
Roughly speaking, EDS is a collection of techniques for determining the space of
local solutions to systems of partial differential equations. The techniques usually
involve extensive computations that can be simplified by exploiting group actions
when such are present, as with the rigidity questions that will be the focus of
this paper.
 In \S\ref{projframessect} we describe
moving frames for submanifolds of projective space and a set of   \lq\lq rigidity\rq\rq\  EDS that
are natural from the point of view of projective differential geometry.
We also describe flexibility results obtained using standard EDS techniques.
A   different method for resolving certain EDS associated to
determining the rigidity of compact Hermitian symmetric spaces (CHSS)
 was introduced
by Hwang and Yamaguchi in \cite{HY} that avoided lengthy calculations
by reducing the proof    to establishing the  vanishing of certain Lie algebra
cohomology groups. At first, it appeared that their methods would not
extend beyond the CHSS cases, but the machinery was finally extended
in \cite{LRrigid}. {\it This extension is the central point of these lectures}.  Several problems had to be overcome to
enable the extension - the problems and their solutions are
discussed in detail in in \S\ref{filtersect}. The first problem
is that the EDS natural for geometry is not natural for representation
theory, once one moves beyond CHSS. This problem is (partially) resolved
in \S\ref{problem1} with the introduction of the systems $(I_p,J_p)$ which are natural for representation
theory.
The next problem is that even these natural systems do not lead one to
Lie algebra cohomology, except in the case of CHSS. However a refined version
of the $(I_p,J_p)$ systems, the {\it filtered systems} $(I^f_p,\Omega)$ do. 
This is explained in \S\ref{problem2}, which then leads to our main theorem, Theorem
\ref{mainthm}.
Before discussing these systems, we describe and compare, for $G/P\subset \BP V$  
the  filtration of $V$ induced by the osculating sequence and a filtration
induced by the Lie algebra in \S\ref{gradingsect}, and briefly review Lie algebra cohomology 
in  \S\ref{Liealgcohsect}.

\subsection{Examples of rational homogeneous varieties}\label{ratexsect}
\subsubsection{Generalized cominuscule varieties}
The simplest rational homogeneous varieties   are
the {\it generalized cominuscule varieties}, which are the homogeneously
embedded compact Hermitian symmetric spaces. In addition to the 
Grassmannians mentioned above, the {\it cominuscule varieties}, which are the irreducible 
CHSS in their minimal homogeneous embeddings,  are
\begin{itemize}
\item the {\it Lagrangian Grassmannians} $G_{\o}(n,W)=C_n/P_n\subset \BP (\La n W/\o \ww \La{n-2}W)$, where $W$
is a $2n$ dimensional vector space equipped with a symplectic form 
$\o\in \La 2 W^*$, $C_n$ is the group preserving the form, and $G_{\o}(n,W)\subset G(n,W)$
are the $n$-planes on which $\o$ restricts to be zero. (Note that we may
use $\o$ to identify $W$ with $W^*$ so $\o \ww \La{n-2}W$ makes sense.)
\item  the {\it Spinor varieties} $\BS_n=D_n/P_n\simeq D_n/P_{n-1}$ which are also isotropic Grassmannians, only
for a symmetric quadratic form, where $W$ again has dimension $2n$. Their minimal homogeneous embedding
is in a space smaller than $\BP (\La nW)$.
\item  the {\it quadric hypersurfaces} $Q^{n-1}=G_Q(1,W)\subset \BP W$, (which
are $B_m/P_1$ and $D_m/P_1$ depending if $n=2m+1$ or $n=2m$)
\item  the {\it Cayley plane} $\BO\pp 2=E_6/P_6\simeq E_6/P_1\subset\BP \cJ_3(\BO)$ which are  
the octonionic lines in $\BO^2$ embedded as the rank one elements
of the exceptional Jordan algebra $\cJ_3(\BO)$, see, e.g., \cite{LMmagic} for details. 
\item  the {\it Freudenthal
variety} $E_7/P_7\subset \pp{55}$ which may be thought of as
an octonionic Lagrangian Grassmanian $G_w(\BO^3,\BO^6)$, see \cite{LMmagic}.
\end{itemize}

\subsubsection{Products of homogeneous varieties}
An elementary, but important generalized cominuscule variety is the {\it Segre variety}. Let
$V,W$ be vector spaces, the Segre variety $Seg(\BP V\times \BP W) \subset \BP (V\ot W)$ as
an abstract variety is simply the product of two projective spaces. It
is embedded as the set of rank one elements of $V\ot W$. 
  In general, if $G/P\subset \BP V$ and $G'/P'\subset \BP V'$,
we may form the product $Seg(G/P\times G'/P')\subset \BP  (V\ot V')$, which
is of course a subvariety of $Seg(\BP V\times \BP V')$.

\subsubsection{Veronese re-embeddings of homogeneous varieties}
Considering $S^dV$ as the space of homogeneous polynomials of degree $d$ on
$V^*$, we can consider the variety of $d$-th powers inside $\BP (S^dV)$,
this is isomorphic to $\BP V$ via the map $v_d: \BP V\ra \BP S^dV$,
$[x]\mapsto [x^d]$, called the {\it Veronese embedding}.
If $X\subset \BP V$ is a subvariety we can consider $v_d(X)$.
Its linear span $\langle v_d(X)\rangle\subset S^dV$   has the geometric interpretation
of the annihilator of  $I_d(X)\subset S^dV^*$, the ideal of $X$ in degree
$d$. In particular, if $X=G/P\subset \BP V_{\l}$ is homogeneous, then
$\langle v_d(G/P)\rangle= V_{d\l}$, the $d$-th {\it Cartan power}
of $V_{\l}$.

\subsubsection{Generalized flag varieties}  Given two   Grassmannians
$G(k,V)$ and $G(\ell,V)$ with say $k<\ell$, we may form the incidence
variety $Flag_{k,\ell}(V)=\{ (E,F)\in G(k,V)\times G(\ell,V)\mid E\subset F\}$.
Of course $Flag_{k,\ell}(V)\subset \BP (\La k V\ot \La {\ell}V)$
Write $\La k V= V_{\o_k}$. Then in fact
$\langle Flag_{k,\ell}(V)\rangle =V_{\o_k+\o_{\ell}}$ giving a geometric
realization of the Cartan product of the two modules $V_{\o_k}$ and $ V_{\o_{\ell}}$.
This generalizes to arbitrary Cartan products as follows:

The cominuscule varieties
are special cases of \lq\lq generalized Grassmannians\rq\rq , that is varieties
$G/P$ where 
$P$ is a maximal parabolic. Such varieties always admit interpretations as
subvarieties of some Grassmannian, usually given in terms of the set of $k$ planes
annihilated  by some tensor(s).
 Given two such for the same group,
$G/P_i\subset \BP V_{\o_i}$ and $G/P_j\subset \BP V_{\o_j}$, we may form an incidence variety $G/P_{i,j}$
and again we will have $\langle G/P_{i,j}\rangle= V_{\o_i+\o_j}$.
Thus  {\it  Cartan powers and products of modules can be constructed geometrically}.

\subsubsection{Adjoint varieties}
After the generalized cominuscule varieties, the next simplest rational homogeneous varieties are the {\it adjoint varieties},
where $V$ is taken to be $\fg$, the adjoint representation of $G$.
We write $G/P=X^{ad}_G\subset\BP \fg$ to denote adjoint varieties.
The adjoint varieties can also be characterized as the homogeneous
compact complex contact manifolds. It is conjectured (see, e.g.,
\cite{LeBrunSalamon,Kebetal}) that they are essentially the only   compact complex contact
manifolds other then projectivized cotangent bundles.
Many of these have simple geometric interpretations.
\begin{itemize}
\item 
$X^{ad}_{SL(W)}=Flag_{1,n-1}(W)$ is the variety of flags of lines
in hyperplanes in the $n$-dimensional vector space $W$.
\item 
$X^{ad}_{SO(W,Q)}=G_Q(2,W)\subset \BP (\La 2 W)=\BP \fso(W)$ is the Grassmannian of isotropic
$2$-planes in $W$. 
\item $X^{ad}_{G_2}=G_{null}(2,Im\BO)$ is  the Grassmannian of
two planes in the imaginary octonions on which the multiplication is zero.
It may also be seen as the projectivization of the set of rank two derivations
of $\BO$, or as the set of six dimensional subalgebras of $\BO$, see
\cite{LMsex}, Theorem 3.1.
\item
  $X^{ad}_{Sp(W,\o)}=v_2(\BP W)\subset \BP S^2W=\BP \fc_n$ is the variety of quadratic
forms of rank one.
\end{itemize}
Note that other than the pathological groups $A_n,C_n$, all adjoint representations
are fundamental. Also note that the adjoint variety of $\fc_n$ is generalized
cominuscule for $\fa_{2n-1}$.

\medskip

\subsection{Notational conventions}
We work over the complex numbers throughout, all functions are holomorphic functions 
  and manifolds are complex manifolds  (although much of the theory carries over to $\BR$,
with some rigidity results even carrying over to the $C^{\infty}$ setting). In particular
the notion of a {\it general point} of an analytic manifold makes sense, which is a point
off of a finite union of analytic subvarieties.
We use the labeling and ordering of roots and
weights as in \cite{bou}. For subsets $X\subset \BP V$, $\hat X\subset V$ denotes the corresponding cone.  For a manifold
$X$, $T_xX$ denotes its tangent space at $x$. For a submanifold $X\subset \BP V$, $\hat T_xX = T_p\hat X\subset V$, 
denotes its affine tangent
space, and $p \in \hat x =: L_x$. In particular, $T_xX=\hat x^*\otimes \hat T_xX/\hat x$.
If $Y\subset \BP W$, then $\langle Y\rangle\subset W$ denotes its linear span. We use the summation
convention throughout: indices occurring
up and down are to be summed over. If $G$ is semi-simple of rank $r$, we write
  $P=P_I\subset G$ for the parabolic subgroup obtained by deleting negative root spaces corresponding to roots having a nonzero coefficient on any of the simple roots $\a_{i_s}$, $i_s \in I\subset\{1\hd r\}$.

\subsection{Acknowledgements} It is a pleasure to thank the organizers
of the 2008 Srni Winter School, especially A. Cap and J. Slovak. I also thank
C. Robles for useful suggestions.

\section{Related projects}\label{otherprojects}
\subsection{Representation theory and computational complexity}
 These projects with Manivel and   Weyman address questions about $G$-varieties motivated
by problems in computer science and algebraic statistics, specifically the complexity
of matrix multiplication and the study of phylogenetic invariants.
For a survey on this work see \cite{Lbull}.

\subsection{Sphericality and tangential varieties}\label{sphere}
For work related to Joachim Hilgert's lectures \cite{Hilgert}, recall that a
normal projective $G$-variety
$Z$ is {\it $G$-spherical} if   for all degrees $d$, $\BC [Z]_d$,
the component of the coordinate ring of $Z$ in degree $d$, is a multiplicity
free $G$-module, see \cite{brion}.
Note that this property for $Z=\t(X)$ {\it a priori} depends both
on $G$ and the embedding of $X$.

\begin{theorem}\label{spherethm}\cite{LWtan}
Let $X=G/P \subset \BP V$ be a 
homogeneously embedded rational homogeneous variety.
Then $\t (X)$ is $G$-spherical iff $X$ admits
the structure of a CHSS,
and no factor of $X$ is $G_2/P_1$. 
\end{theorem}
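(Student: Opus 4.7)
The plan is to translate $G$-sphericality of a normal projective $G$-variety $Z$ into multiplicity-freeness of the graded coordinate ring $\mathbb{C}[\hat Z]=\bigoplus_d \mathbb{C}[\hat Z]_d$ as a $G$-module, and then to split into cases according to the rational homogeneous structure of $X=G/P$. Throughout I would exploit the equivalent characterization of sphericality as the existence of a dense Borel orbit, which is often easier to rule out than to exhibit directly.

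For the \emph{sufficiency} direction (CHSS, no $G_2/P_1$ factor), I would first treat irreducible $X=G/P$ CHSS. The key structural fact is the two-step grading $\mathfrak{g}=\mathfrak{g}_{-1}\oplus\mathfrak{g}_0\oplus\mathfrak{g}_1$ of the Lie algebra associated to $P$, which forces the filtration of $V=V_\lambda$ induced by $P$ to have only two nonzero steps and realizes $X$ uniformly as a rank-constraint variety. Then $\tau(X)$ is analyzed via the $G$-equivariant desingularization $\pi\colon \mathbb{P}(TX)\to \tau(X)$: a Leray/spectral sequence reduces $\mathbb{C}[\tau(X)]_d=H^0(\tau(X),\mathcal{O}(d))$ to Bott--Borel--Weil data on the fibers, and the simplicity of the CHSS grading guarantees the resulting $G$-module is multiplicity-free. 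For products $X=X_1\times X_2$ (both CHSS, neither being $G_2/P_1$), the tangential variety $\tau(\mathit{Seg}(X_1\times X_2))$ has a compatible description, and multiplicity-freeness of each $\mathbb{C}[\tau(X_i)]_{d_i}$ under $B_i$ assembles into a dense $B_1\times B_2$-orbit on $\tau(X)$.

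For the \emph{necessity} of the CHSS condition, the hypothesis that $X=G/P$ is not CHSS is equivalent to $\mathfrak{g}_{-k}\neq 0$ for some $k\ge 2$. Geometrically this produces extra reducibility in the projective second fundamental form $|II_{X,x}|$ at a general point $x$, and translates into extra $G$-isotypic components in $I(\tau(X))_2\subset S^2V^*$; a careful count then exhibits an irreducible $G$-summand appearing with multiplicity $\ge 2$ in $\mathbb{C}[\tau(X)]_d$ for some low $d$. For the $G_2/P_1$-factor exception the subtlety is that $X_1=G_2/P_1\cong Q^5$ is itself $G_2$-spherical (its tangent variety fills $\mathbb{P}V_{\omega_1}$ and inherits the $SO_7$-harmonic decomposition, which remains irreducible under $G_2$), so the obstruction only appears inside a Segre product $X=X_1\times X_2$. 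In the product, the scarcity of $G_2$-invariants on $V_{\omega_1}\otimes V_{\lambda_2}$ is insufficient to separate all the bi-graded summands of $\mathbb{C}[\tau(\mathit{Seg}(X_1\times X_2))]$, and one exhibits a specific multiplicity by combining the $S^2V_{\omega_1}$-decomposition with the structure of $\tau$ for Segre products.

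The main technical obstacle is the sufficiency step: the uniform Bott--Borel--Weil computation on the desingularization requires establishing precise higher-cohomology vanishing, particularly for the exceptional CHSS (Cayley plane, Freudenthal variety); as a backup, one must verify multiplicity-freeness directly using Cauchy/Littlewood-type identities for classical groups and explicit character computation for the exceptional ones. The $G_2/P_1$-factor obstruction is also delicate, since it requires locating a non-vanishing multiplicity precisely where the tangent-variety structure of the product fails to inherit multiplicity-freeness from its factors, rather than at any easy general place.
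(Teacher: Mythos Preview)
This theorem is not proved in the paper. It appears in \S\ref{otherprojects} (``Related projects''), where it is merely \emph{stated} with a citation to \cite{LWtan} (Landsberg--Weyman) in order to give context for the paper's actual subject, the Fubini--Griffiths--Harris rigidity of homogeneous varieties. The surrounding paragraph only adds that in the cominuscule case \cite{LWtan} also establishes normality and rational singularities of $\tau(G/P)$ and gives explicit coordinate-ring descriptions. There is consequently no proof in this paper against which your proposal can be compared.

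A few remarks on the proposal itself, since you asked for feedback. Your overall strategy---desingularize $\tau(X)$ and push the question into Bott--Borel--Weil, then use the depth of the grading of $\fg$ to detect failure of multiplicity-freeness---is in the right spirit for this kind of result. Two points deserve care. First, for the product step you write that multiplicity-freeness of each $\mathbb{C}[\tau(X_i)]$ ``assembles'' into a dense $B_1\times B_2$-orbit on $\tau(\mathit{Seg}(X_1\times X_2))$; but $\tau$ of a Segre product is not built functorially from the $\tau(X_i)$ (it involves joins of the factors with each other's tangent spaces), so this step needs a genuine argument, not just assembly. Second, your reading of the $G_2/P_1$ clause is the delicate one: $G_2/P_1\cong Q^5$ has $\tau=\mathbb{P}^6$, and $S^d\mathbb{C}^7$ is indeed multiplicity-free under $G_2$, so the obstruction cannot appear for $X=G_2/P_1$ alone; you correctly locate it in products, but exhibiting the specific repeated summand there is the heart of the matter and your sketch does not yet indicate which module repeats or in which degree.
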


In \cite{LWtan} we also show that if  $G/P$ is
cominuscule then $\t(G/P)$ is normal, with rational
singularities, and   give explicit and uniform
descriptions of the coordinate rings for all cases in the spirit
of the project described in \S\ref{vogelia} below.

An interesting class of $\t(G/P)$'s occurs for the {\it subexceptional
series}, the third row of Freudenthal's magic chart:
$Seg(\pp 1\times \pp 1\times \pp 1)$,
$G_{\o}(3,6)$, $G(3,6)$, $D_6/P_6=\BS_6$, $E_7/P_7=G_w(\BO^3,\BO^6)$
where   $\t(G/P)$ is a quartic hypersurface whose equation is
given by a generalized hyperdeterminant. See \cite{LMmagic} for details.
The equations of these varieties will play an important role in what follows,
as the Fubini quartic forms  for $X^{ad}_G$  when $G$ is an exceptional group
(see \S\ref{fubsect}).

\subsection{Vogelia}\label{vogelia}
 This project, joint with Manivel, is
inspired conjectural categorical generalizations
of Lie algebras proposed by P. Deligne (for the exceptional series)
\cite{del,del2}
and P. Vogel (for all simple super Lie algebras) \cite{vogel}.
It  has relations Pierre Loday's lectures \cite{Loday} because
both conjectures appear to inspired by operads.

Let $\fg$ be a complex simple Lie algebra. Vogel  derived 
a universal decomposition of $S^2\fg$ into   (possibly virtual)
Casimir eigenspaces, $S^2\fg = \BC\op Y_2\op Y_2'\op Y_2''$
which turns out to be a decomposition into
irreducible modules.  If we let $2t$ denote
the Casimir eigenvalue of the adjoint representation (with respect to some 
invariant quadratic form), these modules
respectively have Casimir eigenvalues $4t-2\a,4t-2\b,4t-2\g$, which
we may take as the definitions of $\a,\b,\g$.
Vogel showed   that $t=\a+\b+\g$. For example,
for $\fso(n)$ we may take $(\a,\b,\g)=(-2,4,n-4)$ and
for the exceptional series $\fso_8,\ff_4,\fe_6,\fe_7,\fe_8$
we may take $(\a,\b,\g)=(-2,m+4,2m+4)$ where
$m=0,1,2,4,8$ respectively.
Vogel then went on
to find             Casimir eigenspaces $Y_3,Y_3',Y_3''\subset S^3\fg$
with eigenvalues $6t-6\a,6t-6\b,6t-6\g$
(which again turn out to be irreducible), and computed 
their dimensions:

\begin{align*}{\rm dim}\, \fg  &=  \frac{(\a-2t)(\b-2t)(\g-2t)}{\a\b\g},\\
 {\rm dim}\, Y_2  &=  -\frac{t(\b-2t)(\g-2t)(\b+t)(\g+t)(3\a-2t)}{\a^2\b\g(\a-\b)(\a-\g)}\\
 {\rm dim}\, Y_3  &=  -\frac{t(\a-2t)(\b-2t)(\g-2t)(\b+t)(\g+t)(t+\b-\a)(t+\g-\a)(5\a-2t)}
{\a^3\b\g(\a-\b)(\a-\g)(2\a-\b)(2\a-\g)}.
\end{align*}
In \cite{LMtrial,LMuniv} we   showed that  some of the phenomena
observed by Vogel and Deligne     persist in all degrees.
For example, let  $\oa$ denote the highest root of $\fg$ (here we have fixed a Cartan subalgebra 
and a set of positive roots). 
Let  $Y_k$ be the $k$-th Cartan power $\fg^{(k)}$ of $\fg$ (the module
with highest weight $k\oa$).  

\begin{theorem}\cite{LMuniv}
Use Vogel's parameters $\a,\b,\g$ as above. The $k$-th symmetric power
of $\fg$ contains three (virtual) modules
$Y_k,Y_k',Y_k''$ with Casimir eigenvalues
$2kt-(k^2-k)\a,2kt-(k^2-k)\b,2kt-(k^2-k)\g$.
Using binomial coefficients defined by
$\binom{y+x}{y} = (1+x)\cdots (y+x)/y!$, we have:
$$\tdim\, Y_k=
\frac{t-(k-\frac 12)\a}{ t+\frac{\a}2}
\frac{\binom{-\frac{2t}{\a}-2+k }{ k}
\binom {\frac{  \b-2t }\a -1+k }{k }
\binom {\frac{  \g-2t }\a -1+k }{k } 
}
{
\binom{-\frac {\b}{\a} -1+k }{k }
\binom{-\frac {\g}{\a} -1+k }{k },
}$$
and   $\tdim\, Y_k',\, \tdim\, Y_k''$ are respectively obtained by exchanging the
role  of $\a$  with $\b$ (resp. $\g$).
\end{theorem}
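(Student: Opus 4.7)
My plan is to reduce the formula to the Weyl dimension formula applied to the $k$-th Cartan power $V_{k\oa}=Y_k$, and then re-express the resulting product over positive roots in Vogel's universal parameters $\a,\b,\g$. The Casimir eigenvalue is the easy half: with the normalization $\langle\oa,\oa\rangle=-\a$ (compatible with Vogel's conventions, e.g.\ $\a=-2$ for $\fso(n)$ matches $\langle\oa,\oa\rangle=2$), the identity $\langle\oa,\oa+2\rho\rangle=2t$ forces $2\langle\oa,\rho\rangle=2t+\a$, so the Casimir of $Y_k$ equals $k^2\langle\oa,\oa\rangle+2k\langle\oa,\rho\rangle=-k^2\a+k(2t+\a)=2kt-(k^2-k)\a$, as claimed. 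The same argument would apply to hypothetical \lq\lq Cartan-like\rq\rq\ powers obtained by exchanging $\a$ with $\b$ or $\g$, which is why the other two eigenvalues appear symmetrically.

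The main work is to evaluate
\[
\tdim Y_k=\prod_{\b\in\Delta^+}\frac{\langle k\oa+\rho,\b\rangle}{\langle\rho,\b\rangle}
\]
in universal form. The key structural input is the height grading $\fg=\fg_{-2}\op\fg_{-1}\op\fg_0\op\fg_1\op\fg_2$ on $\fg$ induced by $\oa$ (this is the grading attached to the adjoint variety $X_G^{ad}$ discussed in \S\ref{ratexsect}). Since $\oa$ is the highest root, $\langle\oa,\b\rangle$ takes only the values $-\a$, $-\a/2$, and $0$ on positive roots, corresponding respectively to $\b=\oa$, $\b\in\Delta_1:=\{\b\in\Delta^+\mid\fg_\b\subset\fg_1\}$, and $\b\in\Delta_0^+$. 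Roots in $\Delta_0^+$ contribute factors of $1$; the single root $\oa$ contributes the factor $(t-(k-\tfrac12)\a)/(t+\tfrac{\a}2)$ after a short computation using $\langle\oa,\rho\rangle=t+\a/2$. Thus the entire non-trivial content of the formula is in the product
\[
\prod_{\b\in\Delta_1}\frac{\langle\rho,\b\rangle-\tfrac{k\a}{2}}{\langle\rho,\b\rangle}
=\prod_{\b\in\Delta_1}\Big(1-\tfrac{k\a}{2\langle\rho,\b\rangle}\Big).
\]

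The hard step is showing this last product is universal, i.e.\ that the multiset $\{\langle\rho,\b\rangle:\b\in\Delta_1\}$ depends only on $\a,\b,\g$. Vogel's \lq\lq universal Lie algebra\rq\rq\ predicts that $\Delta_1$ splits into three orbits under the relevant subgroup of the Weyl group, of cardinalities and weighted by $\rho$-pairings expressible as rational functions in $\a,\b,\g$; concretely one expects each orbit to contribute a factor of the form $\prod_{j=0}^{N-1}(C-j\a/2)/(C'-j\a/2)$, which assembles into the three ratios of Pochhammer symbols, i.e.\ the binomial coefficients $\binom{-2t/\a-2+k}{k}$, $\binom{(\b-2t)/\a-1+k}{k}$, $\binom{(\g-2t)/\a-1+k}{k}$ in the numerator and $\binom{-\b/\a-1+k}{k}$, $\binom{-\g/\a-1+k}{k}$ in the denominator. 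Establishing this grouping is the principal obstacle: \emph{a priori} Vogel's formalism is only known type-by-type, so one must check the identification of $\Delta_1$ and the $\rho$-values case-by-case through the four classical families and five exceptional algebras, matching each against the universal expression. Once this is done, the formulas for $Y_k'$ and $Y_k''$ follow by the formal symmetry $\a\leftrightarrow\b$ and $\a\leftrightarrow\g$ of Vogel's setup (which is why these may be only virtual: the corresponding Cartan-type module need not exist integrally when the relabeling moves one outside the range where $\b,\g$ play the role of $\a$, but the Casimir eigenvalues and formal dimensions persist).
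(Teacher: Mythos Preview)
The paper does not contain a proof of this theorem: it is quoted from \cite{LMuniv} as background in \S\ref{vogelia}, with no argument given here. So there is nothing in the present paper to compare your proposal against.

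That said, your outline is essentially the strategy of \cite{LMuniv}. The Casimir computation is correct with the normalization $\langle\oa,\oa\rangle=-\a$, and reducing $\tdim Y_k$ to the Weyl dimension formula followed by the height-grading decomposition $\Delta^+=\{\oa\}\sqcup\Delta_1\sqcup\Delta_0^+$ is exactly the right move; the prefactor and the triviality of the $\Delta_0^+$ contribution fall out as you say. Your description of the remaining step is slightly off, however: the product over $\Delta_1$ is not organized by three Weyl-group orbits, but rather by the distribution of the values $\langle\rho,\b\rangle$ (equivalently, the heights of the roots in $\fg_1$), which in \cite{LMuniv} is shown to be governed by the root-string structure through $\oa$ and expressed uniformly in $\a,\b,\g$. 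You are right that ultimately this rests on a case check across the simple types; the point of \cite{LMuniv} is to package that check into a single closed formula. Your remark that $Y_k',Y_k''$ are obtained by the formal $\a\leftrightarrow\b$, $\a\leftrightarrow\g$ symmetry, and may therefore be only virtual, is correct.
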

The modules $Y_k',Y_k''$ are described in \cite{LMuniv}. This
dimension formula is also the Hilbert function of $X^{ad}_{G([\a,\b,\g])}$.

\subsection{Cartan-Killing classification via projective geometry}
If $X=G(k,W)\subset \BP(\La k W)$ then the variety of tangent directions
to lines through a point   $E\in X$ is $Y=Seg(\BP E^*\times \BP(W/E))\subset \BP (E^*\ot W/E)
=\BP T_EX$. Moreover one can recover $X$ from $Y$ as the image of
the rational map $\BP  (T\op \BC)\dashrightarrow \pp N$ given
by  the ideals in degree $r+1$ of the varieties $\s_r(Y)$, multiplied
by a suitable power of a linear form
coming from the $\BC$-factor to give them all the same degree.
In \cite{LMsel} we showed that  the same is true for any irreducible cominuscule variety. This
enabled us to give a new, constructive proof of the classification of CHSS, without
having to first classify complex simple Lie algebras.  Moreover, a second
construction   constructs the adjoint varieties and gives a new proof
of the Killing-Cartan classification of complex simple Lie algebras without
  classifying root systems. Here is the construction for adjoint varieties:

Let $Y\subset \pp{n-2}=\BP T_1$ be a generalized cominuscule variety. Define
$Y$ to be {\it admissible} if the span of the embedded tangent lines to
$Y$, as a subvariety of the Grassmannian, has codimension one in $\La 2T_1$.
For generalized cominuscule varieties,
 this condition is equivalent to $Y$ being embedded as a {\it Legendrian variety}.
In particular $\t(Y)$ is a quartic hypersurface (for the exceptional
series, it is the quartic hypersurface described in \S\ref{sphere}).
Linearly embedded $T_1\subset \BC^{n }\subset \BC^{n+1}$ respectively
as the hyperplanes $\{x_n=0\}$ and $\{x_0=0\}$ and consider the rational
map
\begin{align*}
\phi :\pp n&\dashrightarrow \pp N\subset\BP(S^4\BC^{n+1*})\\
[x_0\hd x_n]&\mapsto
[x_0^4,x_0^3T_1^*,x_0^3x_n,x_0^2I_2(Y,\BP T_1),
x_0^2x_nT_1^*-x_0I_3(\t(Y)_{sing},\BP T_1),x_0^2x_n^2-I_4(\t(Y),\BP T_1)]
\end{align*}
In \cite{LMsel} we showed that the image is an adjoint variety and
that all adjoint varieties arise in this way.
Here are the Legendrian varieties $Y$ and the Lie
algebras of the $X^{ad}_G$ that they produce:

$$\begin{array}{rclcc}
Y & \subset & \PP^{n-2}& &\fg\\
\hline \\
v_3(\pp 1)& \subset & \pp 3 & & \fg_2\\
\pp 1\times \QQ^{m-4} & \subset & \pp {2m-5}& & \fso_m\\ 
G_{\omega}(3,6) & \subset & \pp
{13}& & \ff_4\\ G(3,6)& \subset & \pp {19}& & \fe_6\\
\BS_6& \subset & \pp {31}& & \fe_7\\
G_w(\BO^3, \BO^6)& \subset & \pp {55}& & \fe_8 .\end{array}$$
The two exceptional (i.e., non-fundamental) cases are $$\begin{array}{rclcc}
\pp {k-3}\sqcup\pp {k-3} & \subset & \pp{2k-3} & & \fsl_k\\ \emptyset &
\subset & \pp {2m-1} & & \fsp_{2m} . \end{array} $$
See \cite{LMsel,LMpop} for details. The varieties
$Y\subset \pp{n-1}$ are the asymptotic directions $B(II^{X^{ad}_G})\subset\BP T_x{X^{ad}_G}$ defined in
the next section.
 
\section{Projective differential geometry and results}\label{projgeomsect}

\subsection{The Gauss map and the projective second fundamental form}\label{IIsect}
Let $X^n\subset \BP V$ be an $n$-dimensional subvariety or complex manifold.
The {\it Gauss map} is defined by
\begin{align*}
\gamma: X &\dashrightarrow G(n+1,V)\\
x&\mapsto \hat T_xX
\end{align*}
Here  $\hat T_xX\subset V$ is the affine tangent space to $X$ at $x$,
it is related to the intrinsic tangent space $T_xX\subset T_x\BP V$ by
$T_xX=(\hat T_xX/\hat x)\ot \hat x^*\subset V/\hat x\ot \hat x^*$ where $\hat x\subset V$ is
the line corresponding to $x\in \BP V$. 
Similarly $N_xX=T_x(\BP  V)/T_xX=\hat x^*\ot (V/\hat T_xX)$. The dashed arrow is
used because the Gauss map is not defined at singular points of
$X$,  but   does define a rational map.

Now let $x\in X_{smooth}$ and consider
$$
d\gamma_x: T_xX\ra T_{\hat T_xX}G(n+1,V)\simeq (\hat T_xX)^*\ot (V/\hat T_xX)
$$
Since,  for all $v\in T_xX$, $\hat x\subset  \tker  d\gamma_x(v)$,
where $d\gamma_x(v): \hat T_xX\ra V/\hat T_xX$,  we may quotient
by $\hat x$ to obtain
$$d\underline\gamma_x\in T^*_xX\ot (\hat T_xX/\hat x)^*\ot V/(\hat T_xX)=(T^*_xX)^{\ot 2}\ot N_xX.$$
In fact, essentially because mixed partial derivatives commute, we have
$$
d\underline\gamma_x\in S^2T^*_xX\ot N_xX
$$
and we write $II_x=d\underline\gamma_x$, the {\it projective second fundamental form}
of $X$ at $x$. $II_x$ describes how $X$ is moving away from its embedded tangent
space to first order at $x$.

One piece of geometric information that $II_x$ encodes is the following: Think 
of $\BP T_xX\subset \BP T_x(\BP V)$ as the set of tangent directions in $T_x\BP V$ where
there exists a line having contact to $X$ at $x$ to order at least one. Then
$B(II_x):=\BP\{ v\in T_xX\mid II(v,v)=0\}$, often
called the set of {\it asymptotic directions}, is  the set of tangent directions
where there
exists a line having contact to $X$ at $x$ to order at least two. To study the
(macroscopic) geometry of $X$, we may study the smaller variety $B(II_x)$
and ask:
{\it What does $B(II_x)$ tell us about the geometry of $X$?} Note that $B(II_x)$ is
usually the zero set of $\tcodim X$ quadratic polynomials and thus
we expect it to have codimension equal to $\tcodim(X,\BP V)$ (assuming
the codimension of $X$ is sufficiently small, otherwise we expect it
to be empty).

\medskip

Now let $X=G/P\subset \BP V$ be a homogeneous variety. In particular
we have $II_{X,x}=II_{X,y}$ for all $x,y\in X$ so we will simply write
$II^X=II_{X,x}$. To what extent is $X$ characterized by $II^X$?

\medskip

\noindent{\bf{Aside}}.  If the ideal of a projective variety $X\subset \BP V$  is generated in degrees at most
$d$, then any line having contact with $X$ to order $d$ at a point must be contained in $X$.
By an unpublished theorem of Kostant, the ideals of
rational homogeneous varieties are generated in degree two, so $B(II^{G/P})$ corresponds
to the tangent directions to lines through a point.
Thus,  for example, when $X=G(k,V)$, $B(II_E)=Seg(\BP E^*\times \BP(V/E))$.

\subsection{Second order rigidity}
For   the Segre variety, $B(II^{Seg(\pp 2\times \pp 2)})\subset \pp 3$ is
the union of two disjoint lines ($\pp 1$'s). The Segre has codimension four,
and normally the common zero set of four quadratic polynomials on  $\pp 3$ is empty. This prompted
Griffiths and Harris to conjecture:

\begin{conjecture}\cite{GH} [Griffiths-Harris, 1979]
Let $X=Seg(\pp 2\times \pp 2)\subset \BP (\BC^3\ot \BC^3)$. 
Let $Z^4\subset \BP V$ be a variety such that at $z\in Z_{general}$, 
$II_{Z,z}=II^X$, then $Z$ is projectively equivalent to the Segre.
\end{conjecture}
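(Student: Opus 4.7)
The plan is to set up an exterior differential system (EDS) on the frame bundle of $Z$ whose integral manifolds are precisely those four-dimensional submanifolds of $\BP V$ that carry the Segre second fundamental form at every point, and then show that $X = Seg(\pp 2 \times \pp 2)$ is the unique integral manifold up to the action of $SL(V)$. Concretely, I would work on the bundle $\cF \to Z$ of projective frames adapted to the osculating flag $\hat x \subset \hat T_x Z \subset V$, pull back the Maurer-Cartan equations of $GL(V)$, and impose the algebraic conditions that the components of $d\underline\gamma$ match a fixed tensor of the type $II^X \in S^2 T^* \ot N$. By a suitable reduction of the structure group we may assume that at each point the adapted frame realizes the decompositions $T_z Z \simeq E^* \ot F^* \oplus \cdots$ coming from the Segre structure; the equivalence class of $Z$ under $SL(V)$ will correspond to a principal bundle over a single point of the relevant moduli if and only if the EDS has no continuous moduli beyond the initial frame.

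Next I would prolong the system one step to pick up the third-order invariant. Differentiating the relation $II = II^X$ produces a tensor $F_3 \in S^3 T^*_z Z \ot N_z Z$, the third Fubini form (modulo the image of $II^X$ via symmetrization), and the Bianchi-type identities of the Maurer-Cartan equations place $F_3$ in the kernel of a natural symmetrization map. Reading the parabolic structure $Seg(\pp 2\times \pp 2) = (SL_3\times SL_3)/(P_1\times P_1)$ and identifying $T = E^*\ot F^*$, $N = E\ot F$, the allowable $F_3$'s are controlled by a $\fg_0$-equivariant Spencer-type complex whose cohomology is, by Kostant's theorem, computable as a Lie algebra cohomology group $H^1(\fg_-, \fg)_+$ in positive grading. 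The entire strategy is to reduce the question of flexibility to showing that this cohomology group vanishes; since $X$ is a CHSS, the Hwang-Yamaguchi machinery recalled later in the paper applies.

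I would then argue by induction on order. Once the cubic obstruction is killed, the same cohomological argument applied to the higher Fubini forms $F_k$ forces each to coincide with the Fubini form of the Segre; equivalently, all the higher fundamental forms of $Z$ and $X$ agree. Since the Segre is cut out scheme-theoretically by quadrics (Kostant), and a smooth germ with second fundamental form identical to $II^{Seg}$ at every point satisfies the defining quadrics to arbitrary order at a general point, $Z$ must be contained in a translate of $X$; comparing dimensions and irreducibility gives $Z = g\cdot X$ for some $g\in GL(V)$.

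The main obstacle is the cohomological step: checking that no nonzero third Fubini form is compatible with the Segre $II$. Geometrically this is the statement that the deformation space of Segre-like four-folds with prescribed tangential geometry is zero-dimensional, and it is precisely the kind of statement where naive prolongation produces enormous tableaux. The point of the paper's program, culminating in the filtered systems $(I^f_p,\Omega)$, is to recast this vanishing as a Kostant-type Lie algebra cohomology computation that can be read off the Dynkin diagram of $A_2\times A_2$; I would invoke that computation (in its original CHSS form due to Hwang-Yamaguchi) to complete the proof.
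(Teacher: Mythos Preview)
The paper does not itself prove this conjecture; it records it and then cites its resolution (Theorem~\ref{rk2rigid}, proved in \cite{Lrigid,Lchss}).  The method, outlined in \S\ref{sec:Fub2}, is a direct EDS computation: set up the second-order Fubini system $(I_{\tFub_2},J_{\tFub_2})$ on $GL(U)$, compute by hand that $A^{(1)}_{\tred}=0$, pass to the augmented system $\tilde I=\{I,\o_{\fr\upperp}\}$, check again that $\tilde A^{(1)}_{\tred}=0$, augment once more to $\tilde{\tilde I}$, and verify that this last system is Frobenius.  Uniqueness of integral manifolds then forces the adapted frame bundle of $Z$ to lie in a left-translate of $G\subset GL(U)$, whence $Z$ is projectively equivalent to $X$.

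Your plan---recasting the prolongation step as a Lie algebra cohomology vanishing and invoking Kostant---is the Hwang--Yamaguchi alternative described in \S\ref{betterway}, and it does give a second proof for CHSS.  So the strategy is legitimate, but several details in your write-up are off.  First, the relevant module is $\fg\upperp$, not $\fg$: one needs $H^1_d(\fg_-,\fg\upperp)=0$ for $d>0$, since it is the $\fg\upperp$-valued part of the Maurer--Cartan form that must be normalized away (the $\fg$-valued part is what survives on the model).  Second, your identification $T=E^*\ot F^*$ is the Grassmannian picture, not the Segre one; at $([e],[f])\in\BP A\times\BP B$ with $E:=A/\hat e$, $F:=B/\hat f$ one has $T\simeq E\op F$ and $N\simeq E\ot F$ (up to the line twist), and the asymptotic directions $B(II)=\BP E\sqcup\BP F$ are the two disjoint lines mentioned just before the conjecture.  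Third, the induction on higher Fubini forms is vacuous here: $Seg(\pp 2\times\pp 2)$ has osculating length two, so there are no higher fundamental forms to kill---the entire content is in the first prolongation.  Finally, your endgame (``$Z$ satisfies the defining quadrics to all orders, hence lies in a translate of $X$'') is not how one concludes: once the system is Frobenius the unique integral manifold through a frame is already a coset of $G$, and projecting gives $Z=g\cdot X$ directly, without any appeal to the ideal of $X$.
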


\begin{theorem}\cite{Lrigid, Lchss}\label{rk2rigid} The conjecture is true, moreover the
 same result holds when $X$ is any rank two   cominuscule variety except for
$Q^n\subset \pp{n+1}$ and $Seg(\pp 1\times \pp m)\subset \BP (\BC^2\ot\BC^{m+1})$.
\end{theorem}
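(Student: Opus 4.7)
The plan is to translate the hypothesis $II_{Z,z}=II^X$ at a general point into an exterior differential system on the projective frame bundle of $\BP V$ whose only integral manifolds, for the rank two cominuscule $X$ in the statement, are projective translates of $X$ itself. Rigidity will then follow by analytic continuation.

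First I would set up adapted frames. On the $\mathrm{PGL}(V)$-frame bundle over $\BP V$ the Maurer-Cartan form, pulled back, splits according to an adapted flag $\hat x\subset \hat T_xX\subset V$ into semi-basic tangential forms $\omega^i$, normal forms $\omega^\mu$, and secondary forms $\omega^i_0$, $\omega^i_j$, $\omega^\mu_i$, $\omega^\mu_0$. A first-order frame adapted to $Z$ satisfies $\omega^\mu=0$, and a second-order frame imposes the second fundamental form equation $\omega^\mu_i=q^\mu_{ij}\omega^j$, where $(q^\mu_{ij})$ are the structure constants of $II^X$. The hypothesis lets one reduce to the subbundle $\mathcal F^{(1)}$ on which both vanishings hold; for $X=G/P$ cominuscule its structure group is the Levi of $P$ acting on $\fg_{-1}\simeq T_xX$.

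The heart of the proof is prolongation and torsion absorption. Differentiating the two defining equations on $\mathcal F^{(1)}$ and substituting the Maurer-Cartan equations produces algebraic conditions whose obstruction to being absorbed by a choice of the remaining free forms $\omega^\mu_0$ and trace-free part of $\omega^i_j$ lives in the degree-two Lie algebra cohomology $H^2(\fg_{-1},\fg)$ in positive weight. This identification is the framework made explicit in \cite{HY,LRrigid}; in the cominuscule setting the parabolic grading has only three pieces $\fg=\fg_{-1}\oplus \fg_0\oplus \fg_{+1}$, so Kostant's theorem gives $H^2(\fg_{-1},\fg)$ as an explicit sum of $\fg_0$-modules indexed by length-two Weyl elements, and I would verify case by case, for each rank two cominuscule $X$, that the positive-weight piece obstructing absorption vanishes exactly when $X$ is not $Q^n$ or $\mathrm{Seg}(\BP^1\times\BP^m)$. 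In the two excluded cases explicit non-trivial Kostant cocycles exist and give first-order deformations that witness the failure of rigidity, which is consistent with the classical flexibility of hypersurfaces and of one-parameter families of linear spaces.

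Once the relevant cohomology vanishes, the system closes after finitely many prolongations: one absorbs the torsion and obtains a Frobenius system on (a further reduction of) $\mathcal F^{(1)}$ whose leaves, by the fundamental theorem on Maurer-Cartan forms valued in $\fg$, are precisely the $G$-orbits, hence projective images of $X$. So a neighborhood of a general $z\in Z$ is a projective translate of $X$, and by analyticity $Z$ is projectively equivalent to $X$. The main obstacle is the precise matching of the EDS torsion with the correct weighted piece of $H^2(\fg_{-1},\fg)$, together with the accompanying case analysis: one must show both the vanishing responsible for rigidity of $G(2,n)$, $\mathrm{Seg}(\BP^2\times\BP^2)$, and the exceptional rank two cominuscule varieties, and the non-vanishing explaining why $Q^n$ and $\mathrm{Seg}(\BP^1\times\BP^m)$ must be excluded.
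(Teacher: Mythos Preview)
Your overall strategy---set up a moving-frames EDS on an adapted frame bundle, prolong, and show the resulting system is Frobenius so that its leaves are $G$-translates of $X$---matches the shape of the argument in \cite{Lrigid, Lchss}, which this paper only cites and sketches in \S\ref{sec:Fub2}. However, those original proofs proceed by \emph{direct} computation: one calculates the reduced first prolongation $A^{(1)}_{\tred}$ of the second-order Fubini tableau, checks case by case that it vanishes for each rank-two cominuscule $X$ other than $Q^n$ and $Seg(\BP^1\times\BP^m)$, then iterates twice more until the system is Frobenius. The cohomological reinterpretation you propose is precisely the innovation of the later papers \cite{HY, LRrigid}, not the method of \cite{Lrigid, Lchss}.

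More seriously, you have identified the wrong cohomology group. The obstruction to absorbing torsion in this \emph{extrinsic} rigidity problem is not $H^2(\fg_{-1},\fg)$ but $H^1_d(\fg_-,\fg\upperp)$ in positive degree $d$, where $\fg\upperp$ is the $\fg$-complement of $\fg$ inside $\fgl(U)$; see \S\ref{betterway} and Theorem~\ref{mainthm}. The group $H^2(\fg_-,\fg)$ houses the harmonic curvature of an \emph{intrinsic} parabolic geometry modeled on $G/P$, which is a different question. Concretely, the tableau of the Fubini system is $\fr\upperp\subset\fgl(U)_{0,*}$, and its reduced prolongation $A^{(1)}_{\tred}$ identifies (in the CHSS case) with $H^1_1(\fg_-,\fg\upperp)$: first cohomology with coefficients in the complementary module, not second cohomology with coefficients in $\fg$ itself. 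Your appeal to Kostant via length-two Weyl elements is therefore computing the wrong object; even if $H^2(\fg_{-1},\fg)$ happened to vanish in the relevant degrees, that would not by itself establish rigidity without a further argument relating the two groups.
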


One can pose more generally the question: {\it Given a homogeneous variety
$G/P\subset \BP V$,   an unknown variety $Z\subset \BP W$
and a general point $z\in Z$, how many derivatives must we take
at $z$ to conclude $Z\simeq G/P$?}

\subsection{History of projective rigidity questions}
The problem of projective rigidity dates back 200 years when Monge showed $v_2(\pp 1)\subset \pp 2$, the conic curve
in the plane, is characterized by a fifth order ODE, i.e.,  it  is
rigid at order five. More recently, about 100 years ago \cite{fub},
Fubini showed that in dimensions greater than one, quadric hypersurfaces
are rigid at order three, i.e., characterized by a third order system of
PDE.

A vast generalization of Theorem \ref{rk2rigid} was obtained by
Hwang and Yamaguchi:

\begin{theorem}\cite{HY}\label{hythm} Let $X\subset \BP V$ be an irreducible
homogeneously embedded CHSS,  other than a quadric hypersurface or
projective space,  with osculating sequence of length $f$. Then
$X$ is rigid at order $f$.
\end{theorem}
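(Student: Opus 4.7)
The plan is to encode the condition ``$Z \subset \BP W$ osculates $X$ to order $f$ at a general point $z$'' as an exterior differential system on an appropriate adapted coframe bundle, and then to prove rigidity by showing that all formal obstructions to prolonging this EDS vanish — a vanishing that reduces to a Lie algebra cohomology computation. Since $X=G/P$ is CHSS, the Lie algebra carries a $|1|$-grading $\fg = \fg_{-1} \oplus \fg_0 \oplus \fg_1$ with $\fg_{\pm 1}$ abelian. This abelianness is crucial: it forces the osculating filtration of $V$ to split as a sum of $\fg_0$-irreducibles $V = V_0 \oplus V_1 \oplus \cdots \oplus V_f$, where $V_k = S^k\fg_1 \cdot v_\lambda$. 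Thus in the CHSS case the filtration natural for projective differential geometry agrees with the one natural for representation theory — precisely the coincidence that fails once one moves beyond CHSS (see \S\ref{problem1}).

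First, I would set up moving frames on $Z$ adapted to its osculating filtration at $z$, matching them with those of $X$ up to order $f$ as in \S\ref{projframessect}, and express the resulting rigidity EDS as a linear Pfaffian system on the $P$-reduction of the coframe bundle. By construction, the $k$-th osculation condition ($k \leq f$) absorbs the torsion at order $k$, since the $k$-th fundamental form of $X$ is a fixed $\fg_0$-equivariant element of $S^k T^* \otimes (V_k / V_{k-1})$. The symbol of the system is then identified with a $\fg_0$-submodule of a Koszul complex built from $\fg_{-1}$ (which plays the role of the tangent space) and the graded pieces $V_k$.

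Next, I would run the Cartan--Kuranishi prolongation procedure. The main observation, which is the technical core of \cite{HY}, is that the symbols of successive prolongations fit into the Koszul-type complex
\begin{equation*}
 \cdots \longrightarrow \Lambda^{p-1}\fg_{-1}^* \otimes \fg_{k+1} \longrightarrow \Lambda^p \fg_{-1}^* \otimes \fg_k \longrightarrow \Lambda^{p+1}\fg_{-1}^*\otimes \fg_{k-1} \longrightarrow \cdots
\end{equation*}
so that the obstruction to involutivity beyond order $f$ is exactly a particular $G_0$-weight component of the Lie algebra cohomology $H^2(\fg_{-1}, \fg)$ (with $\fg_{-1}$ viewed as an abelian Lie algebra acting on $\fg$ via the bracket).

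Finally, I would invoke Kostant's theorem: as a $\fg_0$-module, $H^*(\fg_{-1}, \fg)$ decomposes as a direct sum of irreducibles indexed by elements of the Hasse subset $W^P$ of minimal-length Weyl coset representatives, with highest weights given by the $\rho$-shifted action. A case-by-case inspection over the classification of irreducible CHSS shows that the bidegree controlling rigidity at order $f$ receives a contribution only when $G/P$ is a projective space or a quadric hypersurface. For every other CHSS the relevant cohomology vanishes, which forces the EDS to be formally integrable with a unique integral submanifold up to projective equivalence, namely $X$ itself.

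The main obstacle is the identification between the Cartan-prolongation tableaux and the Koszul complex computing $H^{*}(\fg_{-1}, \fg)$; this requires careful bookkeeping of the reductions of structure group at each osculating order and the $\fg_0$-equivariance of the fundamental forms. Once this dictionary is set up, the exclusion list $\{Q^{n-1}, \BP^n\}$ reads off mechanically from Kostant's weight combinatorics, and the theorem is uniform across all remaining CHSS.
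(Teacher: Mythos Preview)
Your overall strategy --- encode order-$f$ osculation as a linear Pfaffian system, identify the prolongation obstructions with Lie algebra cohomology, and invoke Kostant --- is exactly the shape of the Hwang--Yamaguchi argument as the paper presents it. But the specific cohomology group you name is wrong, and the error is not cosmetic.

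The Fubini rigidity EDS lives on $SL(U)$ (or a reduction thereof), and the forms one is trying to kill are the $\fg^\perp$-valued components of the Maurer-Cartan form, where $\fg^\perp$ is the $\fg$-complement in $\fsl(U)$. The tableau $A$ is the $\fr^\perp$ piece of $\fgl(U)_{0}$, and the reduced prolongation $A^{(1)}_{\tred}$ is precisely $H^1_1(\fg_{-},\fg^\perp)$ in the CHSS case; the successive reductions are governed by $H^1_d(\fg_{-},\fg^\perp)$ for $d\ge 1$ (cf.\ \S\ref{betterway}, \S\ref{filtersect}, and Theorem~\ref{mainthm} with $p=-1$, which in the $|1|$-graded case collapses to the HY statement). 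So the relevant module is $\fg^\perp$, not $\fg$, and the relevant degree is $H^1$, not $H^2$. Your displayed Koszul complex with coefficients $\fg_k$ computes $H^*(\fg_{-1},\fg)$; for a $|1|$-grading this is a three-term complex that has nothing to do with the prolongation tableau of the extrinsic system. The group $H^2(\fg_{-1},\fg)$ controls curvature of intrinsic parabolic geometries (flatness of $G$-structures in the Tanaka--Yamaguchi sense), which is a different rigidity question from the embedded one here.

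Concretely: if you try to match your complex to the Spencer differential $\delta:A\otimes V^*\to W\otimes\Lambda^2V^*$ you will find that $W\simeq (L^*\ot N)\oplus(T^*\ot N)$ and $A\simeq\fr^\perp$ both sit inside $\fg^\perp$, not $\fg$; there is no way to get the symbols to land in $\fg_k$. Once you replace $\fg$ by $\fg^\perp$ and $H^2$ by $H^1$, the rest of your outline (Kostant's theorem plus the observation that the only cominuscule $G/P$ with nonvanishing $H^1_{\ge1}(\fg_{-1},\fg^\perp)$ are $\BP^n$ and $Q^n$) is correct and is essentially what \cite{HY} does.
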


See \S\ref{oscfiltsect} for the definition of the osculating sequence.

Even more exciting than the theorem of Hwang and Yamaguchi are
the methods they used to prove it. More on this in \S\ref{betterway}.

If one changes the hypotheses slightly, one gets an order two result:
\begin{corollary}\cite{Lima}\label{hycor} Let $X\subset \BP V$ be a
cominuscule variety,  other than a quadric hypersurface. Let $Y\subset \BP W$ be an unknown variety such that
$\tdim \langle Y\rangle=\tdim   V$, and such that for
$y\in Y_{general}$, $II_{Y,y}=II^X$.
Then $Y$ is projectively equivalent to $X$.
\end{corollary}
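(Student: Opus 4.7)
The strategy is to reduce Corollary \ref{hycor} to the Hwang--Yamaguchi rigidity Theorem \ref{hythm} by showing that the hypotheses $II_Y = II^X$ and $\dim\langle Y\rangle = \dim V$ together force the $k$-th fundamental form $FF_k^Y$ of $Y$ at a general point to coincide with $FF_k^X$ for every $k\leq f$, where $f$ is the length of the osculating sequence of $X$.

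First I would set up the osculating picture in parallel at $x\in X$ and $y\in Y_{general}$. At $x$ the filtration $\hat x\subset \hat T_xX \subset \hat T^{(2)}_xX\subset \cdots \subset \hat T^{(f)}_xX = V$ coincides, for cominuscule $X=G/P$ in its minimal embedding, with the filtration induced on $V$ by the $P$-grading. Constructing the analogous filtration $\hat T^{(k)}_yY\subset W$ at $y$, note that the image of $FF_k^Y$ spans the successive quotient $\hat T^{(k)}_yY/\hat T^{(k-1)}_yY$. The identification $II_Y = II^X$ immediately matches the dimensions at level two. Inductively each successive quotient has dimension bounded above by that for $X$ (because $FF_k^Y$ is constrained by prolongation from the lower-order forms), while the totals must agree by the hypothesis $\dim\langle Y\rangle = \dim V$. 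Consequently the osculating dimensions agree at every level, and the filtration of $\langle Y\rangle$ terminates at the same length $f$ as for $X$.

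The inductive step is the content of the argument. Granted $FF_j^Y = FF_j^X$ for $j<k$ under the canonical identifications, the set of $FF_k^Y$ satisfying the prolongation compatibility with $II_Y$ is a torsor under a specific Lie algebra cohomology group; this is precisely the group appearing in the Hwang--Yamaguchi reduction of the rigidity EDS for $X$. For cominuscule $X$ other than the quadric, Kostant's theorem on the cohomology of the nilradical of $\fp$ shows these groups vanish in the relevant bidegrees. Hence $FF_k^Y = FF_k^X$ and the induction closes at $k=f$. Theorem \ref{hythm} then yields that $Y$ is projectively equivalent to $X$.

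The main obstacle is verifying that the prolongation obstruction at each level really is measured by the Hwang--Yamaguchi cohomology group, as opposed to a larger ambient jet-bundle cohomology that could admit extra variation of $FF_k^Y$. The dimension condition $\dim\langle Y\rangle = \dim V$ enters crucially in ruling out the degenerate alternatives in which $FF_k^Y$ would vanish prematurely --- these correspond to integral manifolds of the rigidity EDS with strictly smaller linear span (flat degenerations being the extreme case), and they are admissible without the dimension hypothesis. The quadric is excluded for exactly the reason it is excluded in Theorem \ref{hythm}: the relevant Lie algebra cohomology group fails to vanish there, and genuinely inequivalent $Y$'s with $II_Y = II^{Q^n}$ and $\dim\langle Y\rangle = \dim V$ do arise.
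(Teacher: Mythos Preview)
Your overall strategy---reduce to Theorem~\ref{hythm} by showing that all fundamental forms of $Y$ coincide with those of $X$---is exactly the paper's approach, and your second paragraph already contains the essential mechanism. What you do not state explicitly, and what the paper singles out as the first of the ``two facts'' driving the proof, is that for a cominuscule $X$ the higher fundamental forms $FF_k^X$ are precisely the \emph{full} prolongations of $II^X$. Since for any variety the $k$-th fundamental form lies in the prolongation of the lower ones, the hypothesis $II_Y=II^X$ gives $|FF_k^Y|\subseteq|FF_k^X|$ directly, hence $\dim N_k^Y\le\dim N_k^X$ for every $k$; the assumption $\dim\langle Y\rangle=\dim V$ then forces equality at every stage, so all fundamental forms agree and Theorem~\ref{hythm} finishes the job.

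Your third paragraph, however, is where the proposal drifts. The fundamental forms are not ``a torsor under a Lie algebra cohomology group'': as linear systems they are determined outright by the prolongation of $II$, with no residual freedom to be killed by a vanishing theorem. The Lie algebra cohomology in Hwang--Yamaguchi governs the reduced prolongations of the Fubini EDS tableau---roughly, the freedom in the Fubini forms $F_k$ beyond the fundamental forms $FF_k$---and that entire analysis is already packaged inside Theorem~\ref{hythm}, which you are invoking as a black box. Re-introducing cohomology at the level of the fundamental forms is both unnecessary and not the correct picture; it conflates the Fubini forms on the frame bundle (which do have gauge freedom) with the fundamental forms on $Y$ (which do not). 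Once you replace that paragraph with the one-line fact about cominuscule varieties above, the argument is complete and matches the paper's.
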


The proof of this result uses two facts: that the higher fundamental
forms of cominuscule varieties are the (full) prolongations of the
second, and that any variety with such fundamental forms must
be the homogeneous model (which follows from Theorem \ref{hythm}). See \cite{Lima} for details.

\subsection{Rigidity and flexibility of adjoint varieties}
For the adjoint varieties, it is easy to see that order two rigidity fails
(see \cite{LM0}), even though they have osculating
sequence of length two. These lectures will be centered around the proof of the following
theorem:

\begin{theorem}\cite{LRrigid}\label{adjrigidthm} For simple groups $G$,
 the adjoint varieties $X^{ad}_{G}\subset \BP\fg$ (other than   $G=A_1$)
are rigid at order three.
\end{theorem}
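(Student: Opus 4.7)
The plan is to apply Theorem \ref{mainthm} to the adjoint case, reducing order-three rigidity of $X^{ad}_G\subset\BP\fg$ to the vanishing of a specific piece of Lie algebra cohomology that can be read off from Kostant's theorem. This parallels the Hwang-Yamaguchi proof of Theorem \ref{hythm} in the CHSS case, but since the adjoint parabolic is not cominuscule (with the exception of type $A$), the naive projective system $(I_3,J_3)$ is not representation-theoretically natural, and one must instead work with the filtered system $(I^f_3,\Omega)$ of \S\ref{problem2}.

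First I would set up the EDS. Let $Z\subset\BP W$ be a complex submanifold with $\tdim\langle Z\rangle=\tdim\fg$ and a general point $z$ at which the projective second and third fundamental forms of $Z$ coincide, after a projective identification, with those of $X=X^{ad}_G$ at a reference point. Such germs correspond to integral manifolds of the Pfaffian system $(I_3,J_3)$ on a suitable bundle of third-order adapted frames over $Z$; the homogeneous model $X$ is a distinguished integral manifold. Since $X$ has osculating sequence of length two, order three is the minimal order at which rigidity is conceivable, and order two is known to fail \cite{LM0}.

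Next I would pass to the filtered system. The adjoint parabolic $P$ induces a contact $|2|$-grading $\fg=\fg_{-2}\op\fg_{-1}\op\fg_0\op\fg_1\op\fg_2$ with $\tdim\fg_{\pm 2}=1$, and the osculating filtration on $\fg$ is strictly coarser than this grading filtration (see \S\ref{gradingsect}). Restricting to the subbundle compatible with the grading yields $(I^f_3,\Omega)$, and Theorem \ref{mainthm} reduces rigidity of $X^{ad}_G$ to the vanishing, in a prescribed weight, of the Lie algebra cohomology $H^2(\fg_-,\fg)$, where $\fg_-=\fg_{-1}\op\fg_{-2}$. Kostant's theorem then identifies $H^\bullet(\fg_-,\fg)$ explicitly as a sum of irreducible $\fg_0$-modules indexed by the length-$\bullet$ elements of the Hasse subset $W^P\subset W$, with explicit highest weights.

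The main obstacle is the final bookkeeping: tracking gradings through the geometry-to-algebra reduction of Theorem \ref{mainthm} to isolate precisely which summands of $H^2(\fg_-,\fg)$ produce genuine third-order obstructions — as opposed to obstructions already killed by the second-order hypothesis or absorbed into higher prolongations — and then checking that no such summand survives, uniformly across the simple Lie algebras. The enumeration via $W^P$ is finite and explicit, but the bidegree bookkeeping requires care, and there may be short exceptional cases (e.g.\ $G_2$ or the low-rank classical types) that must be inspected by hand. The exclusion of $A_1$ is forced because $X^{ad}_{A_1}=v_2(\pp 1)\subset\pp 2$ is a curve with no nontrivial contact grading, and is rigid only at order five by Monge's classical theorem rather than at order three.
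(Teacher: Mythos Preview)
Your overall strategy---reduce to a Lie algebra cohomology computation via the filtered systems of \S\ref{filtersect} and then invoke Kostant---is the paper's, but two substantive errors in the reduction would make the argument fail as written.

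First, the cohomology group is wrong. Theorem \ref{mainthm} requires the vanishing of $H^1_d(\fg_-,\fg^\perp)$, where the coefficient module is $\fg^\perp$, the $\fg$-complement of $\fg$ inside $\fsl(U)$ (here $U=\fg$ as the ambient representation). You have written $H^2(\fg_-,\fg)$: wrong degree and, more seriously, wrong module. The group $H^2(\fg_-,\fg)$ governs the \emph{intrinsic} curvature obstructions of the parabolic geometry modeled on $G/P$; the extrinsic projective rigidity question lives entirely in $\fg^\perp$, which is where the ``impostor'' deformations of the frame bundle inside $SL(U)$ take values. Kostant's theorem still applies, but $\fg^\perp$ is a large reducible $\fg$-module, and the actual work is to check $H^1_d(\fg_-,\fg^\perp)=0$ for all $d\geq 1$ across its irreducible summands---not to read off a single Hasse diagram for the adjoint module.

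Second, you have conflated the Fubini order with the filtration index $p$. The third-order Fubini system is $(I_{\tFub_3},J_{\tFub_3})$, not $(I_3,J_3)$ in the paper's notation, and the relevant filtered system is $(I^\textsf{f}_{-1},\Omega)$, i.e.\ $p=-1$. The bridge between the two is Proposition \ref{prop:Fub3=>-1}: one must show that any integral manifold of the third-order Fubini system is already an integral manifold of $(I_{-1},J_{-1})$, hence of the weaker filtered system $(I^\textsf{f}_{-1},\Omega)$. This step is not automatic---it uses the explicit form of $F_3$ for adjoint varieties and the bigrading table of \S\ref{gradingsect}---and your proposal omits it. Once that bridge is in place, Theorem \ref{mainthm} with $p=-1$ reduces rigidity to the vanishing of $H^1_d(\fg_-,\fg^\perp)$ for $d\geq 1$, which is then handled via Kostant.
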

In the case $G=A_1$, $X^{ad}_{A_1}=v_2(\pp 1)$, which Monge showed to be rigid at
order five but not   four.

Robles and I originally wrote a \lq\lq brute force\rq\rq\ proof of this theorem
in December 2006,
although we had been attempting to use the methods of Hwang and Yamaguchi. Finally,
when A. Cap visited us in June 2007, in what can only be described as an incredible
syncronicity, we made the breakthrough needed, in parallel with Cap making a breakthrough
in his work on BGG operators with maximal kernel. In \S\ref{Liealgcohsect} I   describe the methods,
which involve a reduction to a Lie algebra cohomology calculation,  and
which should be useful
for other EDS questions.  I conclude this section with
the description of a result that was obtained using traditional
EDS techniques:

The adjoint varieties are the homogeneous models for certain {\it parabolic geometries},
a much discussed topic at this conference. In particular
they are equipped with an intrinsic geometry that includes a holomorphic contact structure.
All the intrinsic geometry is visible at order two (including the distinguished hyperplane)
{\it except} for the contact structure. This inspires the modified question:

Assume $Z\subset\BP V$ is such that at $z\in Z_{general}$ we have
$II_{Z,z}=II^{X^{ad}_G}$ and the resulting hyperplane distribution is contact,
can we conclude $Z\simeq X^{ad}_G$? Of course for $G=A_1$, we know the 
answer is no, thanks to Monge.

\begin{theorem}\cite{LRrigid} If $G\neq A_1,A_2$, then YES! If
 $G=SL_3=A_2$, then NO!; there exist \lq\lq functions worth\rq\rq\ of
impostors.
\end{theorem}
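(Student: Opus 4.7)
The plan is to apply the filtered-system framework of Section \ref{problem2} to an EDS tailored to the two hypotheses---namely $II_{Z,z}=II^{X^{ad}_G}$ and the contact non-degeneracy of the hyperplane distribution $D\subset TZ$ cut out by $II$. As the text already notes, the first hypothesis alone recovers the distinguished hyperplane but not its Levi form, and it is precisely this Levi form (contact non-degeneracy) that fails at order two for $X^{ad}_G$; supplying it separately as a hypothesis is exactly what is needed to align the osculating filtration of $Z$ with the two-step contact grading $\fg=\fg_{-2}\op\fg_{-1}\op\fg_0\op\fg_1\op\fg_2$, in which $\fg_-=\fg_{-2}\op\fg_{-1}$ is a Heisenberg algebra.

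First I would build the refined filtered EDS $(I_2^f,\Omega)$ on an adapted coframe bundle over $Z$, following the template of Theorem \ref{mainthm}: start from the $(I_2,J_2)$ system encoding the second fundamental form, refine it by the filtration coming from the contact grading (now available because of the contact hypothesis), and identify its symbol. Integral manifolds through a point whose tangent space models $T_xX^{ad}_G$ then correspond to germs of candidate $Z$.

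Next I would run the cohomological reduction of Theorem \ref{mainthm}. Existence and uniqueness of integrals reduce to the vanishing, in the relevant positive bidegrees, of the Lie algebra cohomology $H^1(\fg_-,\fg)$ computed with respect to the contact grading. By Kostant's theorem, this is controlled by the length-one elements of the Hasse diagram of the contact parabolic; a uniform check (or Bourbaki-style case inspection) shows that the only simple complex $\fg$ for which a length-one Weyl element contributes a $\fg_0$-highest weight in the obstructing bidegree are $\fsl_2$ and $\fsl_3$. Outside these two cases every relevant class vanishes and Theorem \ref{mainthm} yields $Z\simeq X^{ad}_G$.

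For $G=A_2$, where the argument above fails, the hard concrete step is turning the surviving cohomology class into an actual family of impostors. I would compute the Cartan character of the EDS at $X^{ad}_{A_2}\subset \pp 7$, show involutivity, and apply Cartan--K\"ahler to exhibit the claimed \lq\lq functions worth\rq\rq\ of local solutions. The principal obstacle is verifying that the second-order Spencer obstruction vanishes so that the first-order deformation given by the surviving Kostant class integrates to a genuine family, rather than being obstructed at higher order---without this verification one only gets an infinitesimal deformation, not an embedded impostor.
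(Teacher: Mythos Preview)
Your approach diverges from the paper's in a substantive way. The paper is explicit, in the sentence introducing this theorem, that the result ``was obtained using traditional EDS techniques'' --- that is, by setting up the second-order Fubini system augmented by the contact normalization and running the Cartan algorithm by hand (torsion absorption, tableau, prolongation, Cartan's test). The filtered-system/Lie-algebra-cohomology machinery of Theorem~\ref{mainthm} is deployed in the paper only for the third-order result Theorem~\ref{adjrigidthm}, not for this one.

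There is also a concrete error in your proposal: you invoke $H^1(\fg_-,\fg)$, but the coefficient module governing the \emph{extrinsic} rigidity problem is $\fg\upperp$, the $\fg$-complement inside $\fgl(U)$, not $\fg$ itself (cf.\ \S\ref{Liealgcohsect} and the statement of Theorem~\ref{mainthm}). The groups $H^\bullet(\fg_-,\fg)$ control the \emph{intrinsic} parabolic geometry --- curvature of the regular normal Cartan connection --- which is a different question. Even with the correct coefficient module, you would still owe an analogue of Proposition~\ref{prop:Fub3=>-1}: a proof that the hypothesis ``$II=II^{X^{ad}_G}$ plus contact'' forces the adapted frame bundle to be an integral manifold of some $(I_p^\textsf{f},\Omega)$ system to which Theorem~\ref{mainthm} applies. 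That reduction is exactly where the contact hypothesis must do real work, and you have not supplied it; without it the invocation of Theorem~\ref{mainthm} is not justified.

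Your plan for the $A_2$ flexibility --- set up the EDS on an adapted coframe bundle over $Z\subset\pp 7$, verify involutivity, invoke Cartan--K\"ahler to produce local integral manifolds depending on functions --- is precisely what ``traditional EDS'' means and matches the paper's method (carried out in \cite{LRrigid}). The concern you raise, that a surviving first-order class might be obstructed at higher order, is exactly why one must complete Cartan's test rather than stop at an infinitesimal count; that computation is the content of the cited reference.
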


\begin{remark} Although the results are formulated in the  holomorphic
category, the exact same result holds in the real analytic category.
\end{remark}

The second conclusion has interesting consequences for geometry. A $3$-manifold
$M$ equipped with a contact distribution which has two distinguished line
sub-bundles is the path space for a {\it path geometry} in the plane. Such structures
have two \lq\lq curvature \rq\rq\ functions, call them $J_1,J_2$, which are
differential invariants that measure the difference between $M$ and the
homogeneous model, which is
$X^{ad}_{SL_3}=Flag_{1,2}(\BC^3)$. This geometry has been well studied by many
authors, including E. Cartan \cite{cartan}. For example, if $J_1\equiv 0$, then the paths are the geodesics
of a projective connection. See \cite{IvL}, Chapter 8 for more.

\begin{theorem}\cite{LRrigid} The general impostor above has $J_1,J_2$ nonzero,
 although they do satisfy   differential relations.
\end{theorem}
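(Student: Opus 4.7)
The plan is to extract $J_1$ and $J_2$ directly from the EDS whose integral manifolds are the impostors. By the preceding theorem's proof, for $G=A_2$ the filtered system $(I^f_p,\Omega)$ on an appropriate frame bundle over candidates $Z\subset \BP\fsl_3$ is involutive, and its general solution depends on a prescribed amount of free data (a finite number of functions of one variable); this parametrizes the impostor family. I would begin by fixing a Cartan--K\"ahler flag adapted simultaneously to the induced contact distribution and to its two distinguished line sub-bundles, which are intrinsic and already visible at order two on any candidate impostor by the assumption of the preceding theorem. With these adaptations in place, I would write the resulting structure equations with the torsion made fully explicit in terms of the free functions.

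Next I would identify $J_1,J_2$ as specific torsion entries in these structure equations. The canonical Cartan connection of a three-dimensional contact path geometry has an essentially unique pair of harmonic curvature components, and I would match these, under the normalization of the adapted coframe, against the torsion that appears when the filtered EDS is prolonged past order two. Concretely, each $J_i$ should emerge as a bilinear expression in the free parametrizing functions and their derivatives along the two line sub-bundles, read off from specific $2$-form coefficients of the structure equations.

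To prove non-vanishing for a general impostor, I would argue by contrapositive: if $J_1\equiv J_2\equiv 0$ on a connected $Z$, then the intrinsic parabolic geometry on $Z$ is flat, hence locally equivalent to the homogeneous model $X^{ad}_{SL_3}=Flag_{1,2}(\BC^3)$. Combined with the fact that the ambient projective embedding data has already been pinned down up to $G$-action by the second-order hypothesis, this flatness would force $Z$ to be projectively equivalent to $X^{ad}_{SL_3}$, contradicting its being a genuine impostor. Since $\{J_1\neq 0\}\cap\{J_2\neq 0\}$ is open in the parameter space of impostors, this yields the first assertion for a generic impostor. The differential relations claimed in the theorem will then be the Bianchi identities: applying $d$ to the structure equations and invoking $d^2=0$ together with the reproducing equations of $(I^f_p,\Omega)$ produces linear PDEs with coefficients in the coframe that the torsion components, hence $J_1,J_2$, must satisfy.

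The main obstacle, and the step where the real work lies, is the matching carried out in the second paragraph: the EDS $(I^f_p,\Omega)$ lives on an \emph{extrinsic} projective frame bundle over $\BP\fsl_3$, whereas $J_1,J_2$ are invariants of the \emph{intrinsic} contact path geometry on $Z$. Bridging these two bundles cleanly --- choosing reductions and normalizations so that the identification of torsion components is canonical rather than ambiguous up to lower-order contributions --- is delicate, and is what prevents the result from being a mere formality once the EDS is known to have solutions. Once this identification is in place, both the non-vanishing and the differential relations follow from the standard Bianchi calculus.
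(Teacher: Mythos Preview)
The paper does not actually contain a proof of this theorem; it is stated as a result quoted from \cite{LRrigid}, so there is nothing to compare your proposal against directly. That said, your outline has a genuine logical gap that would have to be repaired before it could be completed.

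Your contrapositive argument in the third paragraph does not prove what is claimed. Showing that $J_1\equiv 0$ \emph{and} $J_2\equiv 0$ together force $Z$ to be the model only establishes that the locus $\{J_1\equiv 0\}\cap\{J_2\equiv 0\}$ in the impostor family is trivial. It does \emph{not} follow that $\{J_1\not\equiv 0\}\cap\{J_2\not\equiv 0\}$ is nonempty, let alone generic: a priori every impostor could satisfy $J_1\equiv 0$ with $J_2\not\equiv 0$ (or the reverse), and your argument would not detect this. Indeed, the condition $J_1\equiv 0$ has a clean geometric meaning (the paths are geodesics of a projective connection), and it is precisely the content of the theorem that the impostors do \emph{not} all fall into this special class. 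Openness of $\{J_1\neq 0\}\cap\{J_2\neq 0\}$ is trivially true but says nothing about nonemptiness.

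The correct route is the one you sketch in your second paragraph and then abandon: you must actually carry out the identification of $J_1$ and $J_2$ with explicit expressions in the free Cartan--K\"ahler data, and then verify directly that each expression is generically nonvanishing as the free functions vary. This is a computation, not a soft argument. Your observation that the differential relations arise from $d^2=0$ applied to the structure equations is correct in spirit, but again the content lies in writing those relations down explicitly, and that requires the same explicit identification. There is also a secondary gap in your contrapositive: intrinsic flatness of the path geometry gives local equivalence to the model as an abstract parabolic geometry, but you have not argued why this forces the \emph{extrinsic} embedding in $\BP\fsl_3$ to be the standard one; the second-order hypothesis alone does not obviously close this.
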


This is interesting because it is difficult to come up with natural restrictions
on the invariants $J_1,J_2$ short of imposing that one or the other is zero.
 A classical analog of this situation, where the condition of being extrinsically
realizable gives rise to a natural
system of PDE,  is the set of surfaces equipped with Riemannian metrics
that admit a local isometric immersion into Euclidean $3$-space such that the
image is a minimal surface. Ricci discovered that for this to happen, the 
Gauss curvature $K$ of the 
Riemannian metric of the surface must satisfy
the PDE
$$
\Delta log(-K)=4K
$$
where $\Delta$ is the Riemannian Laplacian.
See \cite{CO,IvL} for details.

\section{From PDE to EDS}\label{pdeedssect}
Exercise: show that any system of PDE can be expressed as a first order
system. (Hint: add variables.) Thus we only discuss first order systems.
We want to study them from a   geometric  perspective, that of
submanifold geometry.

Let $\BR^n$ have coordinates $(x^1\hd x^n)=(x^i)$ and
$\BR^m$ coordinates $(u^1\hd u^m)=(u^a)$, let
\be
\label{pdesys}
F^r(x^i,u^a,p^a_i)=0 \ \ 1\leq r\leq R
\ene
be a system of equations in $n+m+nm$ unknowns.
We view this as a system of PDE by stating that
a map
\begin{align*}
 f:\BR^n&\ra \BR^m\\ x&\mapsto u=f(x)
\end{align*}
is a solution of the system determined by \eqref{pdesys}
if \eqref{pdesys} holds when we set $u^a=f^a(x)$ and  $p^a_i=\frp{u^a}{x^i}$.

To rephrase slightly, let $J^1(\BR^n,\BR^m):=\BR^n\times \BR^m\times \BR^{nm}$
have coordinates $(x^i,u^a,p^a_i)$. Consider the differential forms
$$
\th^a:=du^a-p^a_idx^i\in\Omega^1(J^1(\BR^n,\BR^m)) \ \ 1\leq a\leq m
$$
Then we have the following correspondences:

\begin{center} 
\begin{tabular}{|c|c|c|}
Graphs of maps $f:\BR^n\ra \BR^m$,
&\ $\leftrightarrow$\ 
& 
immersions $i:M^n\ra J^1(\BR^n,\BR^m)$ such that
\\ 
$\Gamma_f\subset \BR^n\times \BR^m$ 
& & 
$i^*(\th^a)=0$ and $i^*(dx^1\ww\cdots\ww dx^n)$ is nonvanishing.
\end{tabular}

\bigskip

\begin{tabular}{|c|c|c|} 
Graphs of maps $f:\BR^n\ra \BR^m$,
&
&
immersions $i:M^n\ra \Sigma\subset  J^1(\BR^n,\BR^m)$ 
\\
$\Gamma_f\subset \BR^n\times \BR^m$ satisfying
&\ $\leftrightarrow$\  &
such that
$i^*(\th^a)=0$ and $i^*(dx^1\ww\cdots\ww dx^n)$  is
\\
 the PDE system determined by\eqref{pdesys} 
& &
 nonvanishing, where $\Sigma$ is the zero set of \eqref{pdesys}.
\end{tabular}
\end{center}

\medskip

Now we are ready for EDS:

\medskip

\begin{definition} A {\it Pfaffian EDS with independence condition} on 
a manifold $\Sigma$ is a sequence 
 of sub-bundles   $I\subset J\subseteq T^*\Sigma$. Write
$n=\trank(J/I)$.

An {\it integral manifold} of $(I,J)$ is an immersed $n$-dimensional submanifold $i: M\ra \Sigma$
such that $i^*(I)=0$ and $i^*(J/I)=T^*M$.
\end{definition}

In the motivating example we had $I=\{\th^a\}$ and $J=\{\th^a,dx^i\}$.

Thus we have transformed questions about the existence of solutions to a system
of PDE to questions about the existence of submanifolds tangent to a distribution.

We next show how to determine existence. But first, here are a few
successes of EDS:

\begin{itemize}

\item  Determination of existence of local isometric embeddings of (analytic) Riemannian manifolds
into Euclidean space and other space forms. (e.g., Cartan-Janet theorem),
see, e.g.,  \cite{BBG,BCG3}

\item Proving the existence of 
Riemannian manifolds with holonomy $G_2$ and $Spin_7$  (Bryant \cite{Bryanthol}).

\item  Rigidity/flexibility of Shubert varieties in Grassmannians and other 
symmetric spaces (Bryant \cite{BryantSchubert}).

\item Proving existence of special Lagrangian and other calibrated submanifolds
(Harvey and Lawson, \cite{HL}).

\end{itemize}

\section{The Cartan algorithm to determine local existence of integral manifolds
to an EDS}\label{cartanalgsect}

The essence of the Cartan algorithm is to systematically understand the additional
conditions imposed by a system of PDE by the fact that mixed partial derivatives
commute. In the language of differential forms, this is the statement
$$
i^*(\th)=0\ \Rrightarrow \ i^*(d\th)=0\ \forall\th\in I.
$$
For example, in \S\ref{pdeedssect},  $i^*(d\th)=0$ forces
$\partial p^a_{i}/\partial x^j= \partial p^a_{j}/\partial x^i$ and
on integral manifolds $p^a_i=\partial u^a/\partial x^i$.

\subsection{Linear Pfaffian systems}Among Pfaffian systems, there are those where the set of integral elements through a point
forms an affine space,   the {\it linear} systems.
\begin{definition}
 A Pfaffian EDS is linear if the map
\begin{align*}
 I&\ra \La 2(T^*\Sigma/J)\\
\th&\mapsto d\th \tmod J
\end{align*}
is zero.
\end{definition}

To simplify the exposition we will restrict to linear Pfaffian systems. (This is theoretically
no loss of generality, see \cite{IvL}, Chapter 5.)

Although some of the theory is valid in the
$C^{\infty}$ category (see, e.g., \cite{yang}), we will   work in the real or complex analytic category where the theory
works best, and in the applications of this paper, we will actually work in the holomorphic category.
In particular, it makes sense to talk of a {\it general point} of an analytic manifold,
where general is with respect to the EDS on it (e.g., points where the system does
not drop rank, where the derivatives of the forms in the system don't drop rank, etc.)

Fix $x\in \Sigma_{general}$.  To determine
the integral manifolds through $x$ we work infinitesimally and reduce to problems
in linear algebra  (as one does with most problems in mathematics). 

\begin{definition} An $n$-plane $E\subset T_x\Sigma$ is called an {\rm integral
 element} if $\th_x\mid_E=0$ and $d\th_x\mid_E=0$ for all $\th\in I$.
\end{definition}

Let $\cV(I)_x\subset G(n,T_x\Sigma)$ denote the set of all integral elements at $x$. 
As remarked above, if $(I,J)$ is linear, then $\cV(I)_x$ is an affine space.
Set
\begin{align*}
 V&=(J/I)_x^*\\
W&= I_x^*.
\end{align*}
Fix a splitting $T^*_x\Sigma=J_x\op J^c_x$ and define a bundle map
\begin{align*}
W^*&\ra \La 2 V\\
\th_x&\mapsto d\th_x\tmod I, J^c
\end{align*}
we may consider this map as a tensor $T\in W\ot \La 2 V^*$, which we call
the {\it apparent torsion} of $(I,J)$ at $x$.
Since the apparent torsion changes if we change the splitting, we instead
consider 
\be\label{toreqn}
[T]\in W\ot \La 2V^*/\sim
\ene
called the {\it torsion} of the system at $x$, which is well defined.
The equivalence $\sim$ is precisely over the different choices of splittings
and is   made explicit in \eqref{toreqne} below.

Since on the one hand we are requiring $I$ to vanish on  integral elements
but $J/I$ to be of maximal rank, if $[T]\neq 0$,   there
are no integral elements over $x$, i.e., $\cV(I)_x=\emptyset$.
If this is the case, we start over on the submanifold (analytic subvariety)
$\Sigma'\subset \Sigma$ defined by $[T]=0$.

Now consider the bundle map given by exterior differentiation,
$\th\mapsto d\th \tmod I$, a component of which is
$I\ra (T^*\Sigma/J)\ot J$.   Pointwise this is a linear map
$W^*\ra (T^*\Sigma/J)_x\ot V^*$, which we may consider
as a linear map
$(T^*\Sigma/J)_x^*\ra W\ot V^*$.
Let
$$
A\subset W\ot V^*
$$
denote the image of this map at $x$, which is called
the {\it tableau} of $(I,J)$ at $x$.
For linear Pfaffian systems $A$ corresponds to $\cV(I)_x$, where
we transform the affine space to a linear space by picking a 
base integral element $\th^a_x=\pi^{\ep}_x=0$, where the $\th^a_x$ give a basis of
$I_x$ and $\pi^{\ep}_x$ give
a basis for a choice of $J^c_x$. The quantity $\tdim A$ gives us   an  answer
to the infinitesimal version of the question: {\it How many integral manifolds of $(I,J)$ pass through $x$?}

\begin{example} Say $J=T^*\Sigma$, which is the situation of the Frobenius theorem.
Then there exist integral manifolds iff $T=[T]=0$. Here $A=(0)$ always (which
corresponds to the uniqueness part of the theorem).
\end{example}

We may think of the tableau $A$ as parametrizing the
choices of admissible first order terms in
the Taylor series of an integral manifold at $x$ expressed in terms of a graph.
From this perspective, the next question is: What are the admissible second order
terms in the Taylor series?
At the risk of being repetitive, the condition to check is that:
{\it Mixed partials commute!}

\subsection{Prolongations and the Cartan-K\"ahler theorem}
Let 
$$
\d: W\ot V^*\ot V^*\ra W\ot \La 2 V^*
$$
denote the skew-symmetrization map.
Define 
$$
A\up 1:=\tker\d\mid_{A\ot V^*} = (A\ot V^*)\cap (W\ot S^2V^*)
$$
the {\it prolongation} of $A$.
We may think of $A\up 1$ as parametrizing the admissible second order terms
in the Taylor series. 

At this point we can make explicit the equivalence in \eqref{toreqn}. It is
\be\label{toreqne}
[T]\in W\ot \La 2V^*/\d(A\ot V^*)
\ene 

Now we know how to determine the admissible third order Taylor
terms etc..., but should we keep going on forever? When can we stop working?
The answer is given by the following theorem:

\begin{theorem}[Cartan, Cartan-K\"ahler](see, e.g., \cite{BCG3,IvL})
Let $(I,J)$ be an analytic linear Pfaffian system on $\Sigma$, let $x\in \Sigma_{general}$.
Assume $[T]_x=0$.
Choose an $A$-generic flag $V^*=V^0\supset V^1\supset \cdots \supset V^{n-1}\supset 0$.
Let $A_j:=A\cap (W\ot V^j)$. Then
$$
\tdim A\up 1\leq \tdim A+\tdim A_1+\cdots + \tdim A_{n-1}.
$$
If equality holds then we say $(I,J)$ is {\rm involutive} at $x$ and then
there exist local integral manifolds through $x$ that depend roughly
on $\tdim (A_r/A_{r-1})$ functions of $r$ variables, where
$r$ is the unique integer such that $A_{r-1}\neq A_r=A_{r+1}$.
\end{theorem}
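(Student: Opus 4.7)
The theorem has two independent halves: the Cartan upper bound on $\tdim A\up 1$, which is pure linear algebra coming from the symmetry of mixed partials encoded in the prolongation, and the existence of integral manifolds under involutivity, which is an iterated application of the Cauchy-Kowalewski theorem. I would treat them separately.

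For the inequality, I would choose a basis $e_1,\ldots,e_n$ of $V$ whose dual flag realizes $V^j=\tspan(e^{j+1},\ldots,e^n)$, so that $A_j\subset A$ consists of those linear functionals in $W\ot V^*$ which annihilate $e_1,\ldots,e_j$. An element $B\in A\up 1\subset W\ot S^2V^*$ is a $W$-valued symmetric bilinear form on $V$ whose slices $B(v,\cdot)$ all lie in $A$. I would reconstruct such a $B$ column-by-column: $B(e_1,\cdot)$ is an arbitrary element of $A$, contributing $\tdim A_0=\tdim A$ free parameters; then the symmetry $B(e_k,e_j)=B(e_j,e_k)$ for $j<k$ pins down the values of $B(e_k,\cdot)$ at $e_1,\ldots,e_{k-1}$, so $B(e_k,\cdot)$ varies in an affine subspace of $A$ of dimension $\tdim A_{k-1}$. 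Summing the contributions for $k=1,\ldots,n$ gives the bound, and the genericity of the flag is used to guarantee that the $\tdim A_j$ take their minimum values (so that the bound is tightest and involutivity, i.e.\ equality, is a geometric rather than flag-dependent condition).

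For the existence half under involutivity, I would build an integral $n$-fold in $n$ stages $M^1\subset M^2\subset\cdots\subset M^n$, using at each step an analytic Cauchy problem whose unknowns correspond to those prolongation variables left free at that stage. Involutivity is precisely the condition that makes these Cauchy problems well-posed: the column-by-column count from the first half guarantees that no hidden integrability condition appears from $d^2=0$ beyond the already-assumed $[T]_x=0$, and exterior differentiation of the Pfaffian relations propagates $i^*I=0$ from the initial $M^k$ to the extended $M^{k+1}$. Tracking the free initial data stage by stage gives the counting statement: at the $r$-th stage one freely prescribes a quantity valued in a space of dimension $\tdim(A_r/A_{r-1})$, depending on $r$ variables (coordinates on $M^r$), and the largest $r$ with $A_r\neq A_{r-1}$ dominates the generality count. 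The main obstacle is justifying that formal solutions are actual analytic ones; Cauchy-Kowalewski does exactly this, which is also the reason the theorem is stated analytically, since Lewy-type examples show the $C^\infty$ case genuinely fails without further hypotheses.
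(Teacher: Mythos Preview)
The paper does not actually prove this theorem: it is stated as a classical result with the parenthetical citation ``(see, e.g., \cite{BCG3,IvL})'' and no argument is given, the exposition moving directly to the flowchart and exercises. Your sketch is the standard textbook proof, essentially what one finds in the cited references: the column-by-column reconstruction argument for the Cartan inequality (each successive slice $B(e_k,\cdot)$ constrained by symmetry to an affine space modeled on $A_{k-1}$) and the iterated Cauchy--Kowalewski construction for existence in the involutive case. So there is nothing to compare against in the paper itself, but your approach is correct and is the one the cited sources take.
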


\bigskip
\subsection{Flowchart and exercises} Here $\Omega\in \La n(J/I)$ encodes the
independence condition:

{\small
\setlength{\unitlength}{.8mm}
\begin{center}
\begin{picture}(156,90)(0,-5)
\put(0,60){\framebox(36,16){Rename $\Sigma'$ as $\Sigma$}}
\put(36,68){\vector(1,0){12}}
\put(48,60){\framebox(52,22){\framebox(50,20){\Cbox{4cm}{
{\bf Input}:\\ linear Pfaffian system\\
 $(I,J)$ on $\Sigma$;\\
calculate $dI \tmod I$
}}}}

\put(106,68){\vector(-1,0){6}}
\put(106,56){\framebox(50,24){\Cbox{4cm}{
``Prolong'', i.e., start over\\ on a larger space $\tilde\Sigma$; \\
rename  $\tilde\Sigma$ as $\Sigma$\\ and new system as  $(I,J)$
}}}

\put(18,42){\vector(0,1){18}\ N}
\multiput(18,42)(15,-10){2}{\line(-3,-2){15}}
\multiput(18,42)(-15,-10){2}{\line(3,-2){15}}
\put(5,27){\makebox(26,10){Is $\Sigma'$ empty?}}

\put(75,60){\vector(0,-1){15}}
\multiput(75,45)(15,-10){2}{\line(-3,-2){15}}
\multiput(75,45)(-15,-10){2}{\line(3,-2){15}}
\put(60,30){\makebox(30,10){Is $[T]=0$?}}
\put(90,35){\vector(1,0){24}}\put(90,37){Y}

\put(132,47){\vector(0,1){9}\ N}
\multiput(132,47)(18,-12){2}{\line(-3,-2){18}}
\multiput(132,47)(-18,-12){2}{\line(3,-2){18}}
\put(117,30){\makebox(30,10){\Cbox{3cm}{Is tableau involutive?}}}

\put(18,22){\vector(0,-1){10}}\put(18,19){\ Y}
\put(0,-5){\framebox(36,17){\Cbox{2.8cm}{{\bf Done}:\\ there are no integral manifolds}}}

\put(75,25){\vector(0,-1){8}}\put(75,22){\ N}
\put(50,1){\framebox(50,16){\Cbox{4cm}{
Restrict to $\Sigma'\subset \Sigma$\\
defined by $[T]=0$\\ and $\Omega\mid_{\Sigma'}\neq 0$
}}}
\put(50,10.666){\vector(-3,2){24.5}}

\put(132,23){\vector(0,-1){11}}\put(132,20){\ Y}
\put(115,-5){\framebox(40,17){\Cbox{3.2cm}{{\bf Done}:\\ local existence of integral manifolds}}}
\end{picture}
\end{center}}

\exerset{Set up the EDS  and perform the Cartan algorithm in the following problems:
\item  The Cauchy-Riemann equations $u_x=v_y, u_y=-v_x$. (Work on a codimension
two submanifold of $J^1(\BR^2,\BR^2)$.)
\item Find all surfaces $M^2\subset \BE^3$ such that every point is an umbillic point.
\item Determine the local existence of special Lagrangian submanifolds of $\BR^{2n}\simeq \BC^n$.
\item (For the more ambitious.) Pick your favorite $G\subset SO(p,q)$ and determine
local existence of pseudo-Riemannian manifolds with holonomy $\subseteq G$.\
\item After you read \S\ref{projframessect}, show that $Seg(\pp 2\times \pp 2)\subset \pp 8$ is
rigid to order two. Then roll up your sleeves to show that $Seg(\pp 1\times \pp n)$ is
flexible at order two.
}

\subsection{For fans of bases}
Here is a recap in bases: take a local coframing of $\Sigma$
adapted to the flag $I\subset J\subset T^*\Sigma$, i.e., write
$I=\{\th^a\}$, $1\leq a\leq \trank I$, $J=\{\th^a,\o^i\}$, $1\leq i\leq \trank(J/I)$,
$T^*\Sigma=\{\th^a,\o^i,\pi^{\ep}\}$, $1\leq \ep\leq \trank(T^*\Sigma/J)$.
Then there exist functions $A\hd H$ such that
\begin{align*}
d\th^a=& A^a_{\ep i}\pi^{\ep}\ww\o^i + T^a_{ij}\o^i\ww\o^j + E^a_{\ep,\d}\pi^{\ep}\ww\pi^{\d}\\
&+F^a_{bi}\th^b\ww\o^i + G^a_{b\ep}\th^b\ww\pi^{\ep} +H^a_{bc}\th^b\ww\th^c.
\end{align*}
Since we only care about $d\th^a\tmod I$ we ignore the second row.
The system is linear iff $E^a_{\e\d}=0$.
The   apparent torsion is
$T=T^a_{ij}v^i\ww v^j\ot w_a\in \La 2 V^*\ot W$.
The tableau is
$$A=\{A^a_{\ep i}v^i\ot w_a\mid 1\leq \ep\leq \trank(T^*\Sigma/J)\}\subset V^*\ot W
$$
The torsion is
\begin{align*}
[T]=&T^a_{ij}w_a\ot v^i\ww v^j\tmod \{(A^a_{j\ep}e^{\ep}_i-A^a_{i\ep}e^{\ep}_j)w_a\ot v^i\ww v^j
\mid  e^{\ep}_j\in \BF \}\\
&\in W\ot \La 2 V^*/\d(A\ot V^*).
\end{align*}
Here $\BF= \BR$ or $\BC$.

\section{Moving frames for submanifolds of projective space}\label{projframessect}

Let $U=\BC^{N+1}$. Let $G\subseteq GL(U)$ have
{\it Maurer-Cartan form} $\o_{\fg}\in\Omega^1(G,\fg)$. Recall that 
the Maurer-Cartan form has the following properties:
\begin{itemize}
\item left invariance: $L_g^*\o=\o$ where $L_g: G\ra G$ is the map $a\mapsto ga$.
\item $\o_{Id}: T_{Id}G\ra \fg$ is the identity map
\item $d\o=-\o\ww \o$ or equivalently, $d\o=-\frac 12 [\o,\o]$ (Maurer-Cartan equation) 
\end{itemize}

(Here $[\o,\eta](v,w):=[\o(v),\eta(w)]-[\o(w),\eta(v)]$.)

\subsection{Adapted frame bundles}
We want to study the geometry of submanifolds $Y\subset \BP U$ from the perspective
of Klein, that is we consider  $Y\sim Z$ if there exists $g\in GL(U)$ such that
$g.Y=Z$. In order to efficiently incorporate the group action, we will
work \lq\lq upstairs\rq\rq\ on $GL(U)$. Consider the projection map
\begin{align*}
\pi: GL(U)&\ra \BP U\\
(e_0\hd e_N)&\mapsto [e_0]
\end{align*}
where we view the $e_j$ as column vectors. Fixing a reference basis,
we may identify $GL(U)$ with the set of all bases of $U$. We will restrict ourselves
to   submanifolds of $GL(U)$ consisting of bases adapted to the local
differential geometry of $Y\subset \BP U$.
First, consider $\cf^0_Y:=\pi\inv(Y)$, the $0$-th order adapted frames (bases).
Let $n=\tdim Y$.
Next consider
$$
\cf^1_Y:=\{ f=(e_0\hd e_N)\in \cf^0_Y \mid \hat T_{[e_0]}Y=\{ e_0\hd e_n\}\}
$$ 
the frames adapted to the flag $\hat x\subset \hat T_xY\subset U$ over
each point, called the {\it first order adapted frame bundle}.
Write $L=\hat x$, $T=\hat T_xY/\hat x$, $N=U/\hat T_xY$.  
Adopt index ranges $1\leq \a,\b\leq n=\tdim T_xY$, $n+1\leq \mu,\nu\leq \tdim U-1$.
Write $\fgl(U)= (L \op T\op N)^*\ot (L\op T\op N)$
and let, for example,  $\o_{L^*\ot T}$ denote the component
of $\o$ taking values in $L^*\ot T\subset U^*\ot U=\fgl(U)$.
Write
\be
\o_{\fgl(U)}=\begin{pmatrix}
\ooo 00&\ooo 0\b&\ooo 0 \nu\\
\ooo \a 0&\ooo \a\b&\ooo\a\nu\\
\ooo \mu 0&\ooo\mu\b&\ooo\mu\nu
\end{pmatrix} =\begin{pmatrix}
\o_{L^*\ot L}&\o_{T^*\ot L}&\o_{N^*\ot L}\\
\o_{L^*\ot T}&\o_{T^*\ot T}&\o_{N^*\ot T}\\
\o_{L^*\ot N}&\o_{T^*\ot N}&\o_{N^*\ot N}
\end{pmatrix} \, .
\ene
Write $i: \cf^1_Y\ra GL(U)$ as the inclusion.
We have $i^*(\ooo\mu 0)=i^*(\o_{L^*\ot N})=0$.
(Note that at each $f\in \cf^1_Y$ we actually have a splitting
$U=L\op T\op N$.)

\subsection{Fubini Forms}\label{fubsect}
Now anytime you ever see a quantity equal to zero, {\it Differentiate it!}
We have
$$
i^*(\oo\mu)=0 \Rrightarrow i^*(d\oo\mu)=0
$$
which, using the Maurer-Cartan equation tells us
that
$$
i^*(\ooo\mu\a\ww\oo\a)=0
$$
(note use of summation convention).
We are assuming that the forms $i^*(\oo \a)$ are linearly independent
(as they span the pullback of $T^*Y$ by our choice of adaptation) so
we must have
$$
i^*(\ooo\mu\a)=\qq\mu\a\b i^*(\oo\b)
$$
for some functions $\qq\mu\a\b:\cf^1_Y\ra \BC$.
Moreover (exercise) $\qq\mu\a\b=\qq\mu\b\a$ for all $\a,\b$ (this is often
called the {\it Cartan Lemma}).
The functions $\qq\mu\a\b$ vary on the fiber, but they do contain geometric
information. If we form the tensor field
$$
F_2:=\qq\mu\a\b\oo\a\circ\oo\b\ot e^0\ot( e_{\mu} \tmod \hat T_xY)
\in \Gamma(\cf^1_Y, \pi^*(S^2T^*Y\ot NY))
$$
a short calculation shows that $F_2$ is constant on the fibers, i.e.
$F_2=\pi^*(II)$ for some tensor $II\in \Gamma(Y,S^2T^*Y\ot NY)$.
$II$ is indeed the projective second fundamental form defined
as the derivative of the Gauss map in \S\ref{projgeomsect}.

Unlike with the case of the Gauss map, where it was not clear how to continue, here
it is - we have a quantity equal to zero: $\ooo\mu\a-\qq\mu\a\b\oo\b$ so we differentiate
it! (From now on we drop the $i^*$ when describing pullbacks
of differential forms to simplify notation.) The result is that there exist functions
$$
\rr\mu\a\b\g: \cf^1_Y\ra \BC
$$
such that 
$$
d\qq\mu\a\b-\qq\mu\a\b\ooo 00-\qq\nu\a\b\ooo\mu\nu+\qq\mu\a\ep\ooo\ep\b+\qq\mu\b\ep\ooo\ep\a
=\rr\mu\a\b\g\oo\g
$$
which gives rise to a tensor field
$$
F_3\in \Gamma(\cf^1_Y,\pi^*(S^3T^*Y\ot NY))
$$
This tensor, called the {\it Fubini cubic form} does not descend to be well defined on $Y$,  but it does contain
important geometric information.

\subsection{Second order Fubini systems}\label{sec:Fub2}
Fix vector spaces $L,T,N$ of dimensions $1,n,a$ and
fix an element $F_2\in S^2T^*\ot N\ot L$. Let $U = L\op T\op N$, and let $\o\in \O^1(GL(U),\fgl(U))$ denote the Maurer-Cartan form. 

Writing the Maurer-Cartan equation
component-wise yields, for example,
$$
d\o_{L^*\ot T}=-
\o_{L^*\ot T}\ww\o_{L^*\ot L}
-\o_{T^*\ot T}\ww\o_{L^*\ot T}
-\o_{N^*\ot T}\ww\o_{L^*\ot N}.
$$
 
Given $F_2 \in L \ot S^2T^* \ot N$, 
the {\it second order Fubini system} for $F_2$ is
$$I_{\tFub_2}=\{\o_{L^*\ot N}, \o_{T^*\ot N}-F_2(\o_{L^*\ot T})\},\ \ 
 J_{\tFub_2}=\{I_{\tFub_2}, \o_{L^*\ot T}\}.
$$
Its integral manifolds are submanifolds $\cF^2 \subset GL(U)$ that
are adapted frame bundles of submanifolds $X\subset \BP U$
having the property that at each point $x\in X$, the projective
second fundamental form $F_{2,X,x}$ is equivalent to $F_2$.
(The tautological system for frame bundles of
arbitrary $n$ dimensional submanifolds
is given by $I=\{\o_{L^*\ot N}   \}$, 
$ J=\{I, \o_{L^*\ot T}\}$.)

Let $R\subset GL(L)\times GL(T)\times GL(N)$
denote the subgroup stabilizing $F_2$ and
let 
$$\fr\subset (L^*\ot L)\op (T^*\ot T)\op 
(N^*\ot N) =:\fgl(U)_{0,*}
$$
denote its subalgebra.  These are the elements of $\fgl(U)_{0,*}$ annihilating $F_2$. (The motivation for the notation $\fgl(U)_{0,*}$ is explained in \S\ref{oscfiltsect}.)  Assume $\fr$ is reductive
so that we may decompose $\fgl(U)_{0,\ast}=\fr\op\fr\upperp$ as an $\fr$-module.

\medskip

In the case of homogeneous varieties $G/P$, 
$F_2\in S^2T^*\ot N\ot L$ will correspond to a trivial representation
of the Levi factor of $P$, which we denote $G_0$. For example, let $G/P=G(2,M)\subset \BP \La 2 M$
be the Grassmannian of $2$-planes. Then $R=G_0= GL(E)\times GL(F)$,
$T=E^*\ot F$, $N=\La 2 E^*\ot \La 2 F$, and
we have the decomposition $S^2T^*=(\La 2 E\ot \La 2 F^*)\op(S^2E\ot S^2F)$,
and $F_2\in S^2T^*\ot N$ corresponds to the trivial representation
in $ (\La 2 E\ot \La 2 F^*)\ot (\La 2 E\ot \La 2 F^*)^*$.

\medskip

In the notation of \S\ref{cartanalgsect},
\begin{displaymath}
  V \ \simeq \ L^*\ot T \, , \quad 
  W \ \simeq \ (L^*\ot N)\op (T^*\ot N) \, , \quad 
  A \ \simeq \ \fr\upperp \, , 
\end{displaymath}
with $L^*\ot N\subset W$ in the first derived system.
That  $\fr\upperp \subset V^*\ot W$    may be seen as follows
\begin{eqnarray}
  \nonumber 
  d(\o_{T^*\ot N}-F_2(\o_{L^*\ot T})) & = & 
  - \ \o_{T^*\ot L}\ww\o_{L^*\ot N} \ - \ \o_{T^*\ot T}\ww\o_{T^*\ot N} \\
  & & \nonumber
  - \ \o_{T^*\ot N}\ww\o_{N^*\ot N} \ 
  + \ F_2 ( \o_{L^*\ot L}\ww\o_{L^*\ot T} ) \\
  & & \nonumber
  + \ F_2 ( \o_{L^*\ot T}\ww\o_{T^*\ot T} \ 
  + \ \o_{L^*\ot N}\ww\o_{N^*\ot T} ) \\
  & \equiv & \label{f2calc}
  - \o_{T^*\ot T}\ww F_2(\o_{L^*\ot T}) \ 
  - \ F_2(\o_{L^*\ot T})\ww\o_{N^*\ot N} \\ \nonumber
  & & - F_2(-\o_{L^*\ot L}\ww\o_{L^*\ot T} \ 
      - \ \o_{L^*\ot T}\ww\o_{T^*\ot T}) \ \tmod I  \\ \nonumber
 & \equiv &  (\o_{0,\ast} \, . \, F_2)\ww\o_{L^*\ot T} \ \tmod I \\ \nonumber
 & \equiv & (\o_{\fr\upperp} \, . \, F_2)\ww\o_{L^*\ot T} \ \tmod I \, .
\end{eqnarray}
To understand the last two lines, $\w_{0,\ast} \, . \, F_2$ denotes the action of the $\fgl(U)_{0,\ast}$--valued component $\w_{0,\ast}$ of the Maurer-Cartan form on $F_2 \in S^2T^* \ot N$.  Recall that $\fr$ is the annihilator of this action.    By definition $\w_{0,\ast} \, . \, F_2 = (\w_\fr + \w_{\fr^\perp}) \, . \, F_2= \w_{\fr^\perp} \, . \, F_2$.

\medskip

 For the Cartan algorithm we need to calculate
$A\up 1=\tker\d$ where
$$
\d: \fr\upperp\ot V^*\ra W\ot \La 2 V^*
$$

One can check directly that $A$ is {\it never} involutive for any $F_2$ system. (One has not yet uncovered all commutation relations among mixed partials. This is
essentially because we have yet to look at the entire Maurer-Cartan form).

Thus we need to prolong, introducing elements of $A\up 1$ as new variables and
differential forms to force variables representing the elements of $A\up 1$ to behave properly, just as the 
$\th^a$'s forced the $p^a_i$'s to be derivatives in \S\ref{pdeedssect}.

Before doing so, we simplify our calculations by exploiting the group action
to normalize $A\up 1\sim F_3$ as much as  possible.
Write $\fgl(U)_{1,*}:=T^*\ot L+N^*\ot T$.
Consider the linear map
$$
\d: \fgl(U)_{1,*}\ra (L^*\ot T)\ot \fr\upperp = V^*\ot A
$$
defined as the transpose of the Lie bracket
$$
[,]: \fgl(U)_{1,*}\times L\ot T^*\ra \fr\upperp\subset \fgl(U)_{0,*}
$$
Now $L\ot T^*\subset \fgl(U)_{-1.*}:= \fgl(U)_{1.*}^*$.
Then we define
$$
A^{(1)}_{red}
:=\frac{\tker\d: A\ot V^*\ra W\ot \La 2 V^*}
{\tim \d: \fgl(U)_{1,*}\ra A\ot V^*}.
$$
One can calculate directly, that
when $X$ is a rank $2$ CHSS in its minimal homogeneous embedding
(other than a quadric or $\pp 1\times \pp m$)
and $\overline F_2=II^X$, that $A^{(1)}_{red}=0$.
In  these cases, we begin again with a new system
$$
\tilde I:=\{I,\o_{\fr\upperp}\}
$$
on $GL(U)$.
Again, one can check that $\tilde A$ is never involutive, but that
$\tilde {A}_{red}\up 1=0$.
Finally, one defines
$$
\tilde{\tilde I}=\{\tilde I, \o_{\fr\upperp}\}
$$
which turns out to be Frobenius in the case of rank 2 CHSS, i.e. $\tilde{\tilde A}\up 1=0$,
which implies rigidity.

\subsection{An easier path to rigidity?}\label{betterway}
A better way to obtain the same conclusion is to observe that
$A^{(1)}_{red}$ looks like the graded Lie algebra cohomology
group $H^1_1(\fg_{-},\fg\upperp)$ defined in \S\ref{Liealgcohsect}. In the CHSS case, it
indeed is this cohomology group, but in all other cases
it is not. In the next few sections we will see that the corrrespondence
is exact in the 
CHSS case, and how it fails in all other cases - it fails in two
ways, but none the less, with the introduction of certain
{\it filtered} EDS, the use of Lie algebra cohomology can be recovered.

\section{Osculating gradings and root gradings}\label{gradingsect}

As mentioned above, for homogeneously embedded CHSS, the osculating
filtration  and a filtration induced by the Lie algebra coincide,  but that
these two differ for all other homogeneous varieties. In this section we
explain the two filtrations.

 
\subsection{The osculating filtration}\label{oscfiltsect}

Given a submanifold $X\subset \BP U$, and $x\in X$, the {\it osculating filtration at $x$}
$$U_0\subset U_1\subset\cdots \subset U_r=U$$
is defined by 
\begin{align*}
U_0&=\hat x,\\
U_1&=\hat T_xX,\\
U_2&=U_1+F_2(L^* \ot S^2T_xX)\\
&\vdots\\
U_r&=U_{r-1}+F_r(L^*{}^{\ot (r-1)}\ot S^rT_xX).
\end{align*}
   We may reduce the frame bundle $\cF^1_X$ to framings adapted to the osculating sequence by 
restricting to  
$e = (e_0 , e_\a , e_{\m_2} , \ldots , e_{\m_f}) \in \cF^1_X$ such that $[e_0] \in X$, $\hat T_{[e_0]} X = \tspan \{ e_0 , e_\a \}$ and
$U_k = \tspan \{ e_0 , e_\a , e_{\m_2} , \ldots , e_{\m_k} \}$.  (The indices $\a$ and $\m_j$ 
respectively range over $1 , \ldots , n$ and $\tdim U_{j-1} + 1 , \ldots , \tdim U_j$.)  From now on we work on this reduced frame-bundle, denoted $\cF^r_X \subset \cF^1_X$.   

At each point of $\cF^r_X$ we obtain a splitting of $U$.  This induces a splitting
$$
\fgl(U)=\oplus\fgl(U)_{k,\ast} \, .
$$
(The asterisk above is a place holder for a second  splitting given by the representation theory when $X = G/P$
that we define in \S\ref{rootgradingsect}.) 

The osculating filtration of $U$   determines a refinement of the Fubini forms.  Let $N_k=U_k/U_{k-1}$ and define $F_{k,s}: N_k^*\ra  L^{\ot (s-1)}\ot S^sT^*_xX$
by restricting $F_s\in N_k*\ot L^{*\ot (s-1)}\ot S^sT_xX$ to $N_k^*$. 
Although the Fubini forms do not descend to well-defined tensors on $X$, the {\it fundamental forms} $F_{k,k}$ do.  By construction, $F_{k,k} : L^*{}^{\ot (k-1)} \ot S^kT_xX \to N_{k,x}X$ is surjective.     
\medskip

\subsection{The root grading}\label{rootgradingsect}

Let $\tilde\fg$ be a complex semi-simple Lie algebra with a fixed set of simple roots $\{ \a_1\hd \a_r \}$, and corresponding fundamental weights $\{\w_1 , \ldots , \w_r \}$.  Let $I \subset \{1 , \ldots , r\}$, and consider the irreducible representation $\mu:\tilde\fg\ra\fgl(U)$ of highest weight $\l=\sum_{i  \in I} \l^{i }\o_{i }$.
Set $\fg=\mu(\tilde\fg)$, and let $\mu (G)\subset GL(U)$
be the associated Lie group so that
$ G/P \subset \BP U$ is the orbit of a highest
weight line.  Write $P=P_I\subset G$ for the parabolic subgroup obtained by deleting negative root spaces corresponding to roots having a nonzero coefficient on any of the simple roots $\a_{i }$, $i  \in I$.

Since $\tilde\fg$ is reductive, we have a splitting $\fgl(U)=\fg\op \fg\upperp$, where $\fg\upperp$ is the $\tilde\fg$-submodule of $\fgl(U)$ complementary to $\fg$. 
Let $\o\in \O^1(GL(U),\fgl(U))$ denote the Maurer-Cartan form of $GL(U)$, and let $\o_{\fg}$ and $\o_{\fg^\perp}$ denote the components of $\o$ taking values in $\fg$ and $\fg^\perp$, respectively.  

The bundle  $\cF^1_{G/P}$ 
admits a reduction to a bundle $\cF^G_{G/P} =\mu (G)$.  On this bundle  the Maurer-Cartan form pulls-back to take values in $\fg$,
that is,
$\w_{\fg^\perp} = 0$.  Conversely, all $\tdim (G)$ 
dimensional integral
manifolds of the system $I=\{\w_{\fg^\perp}\}$
are left translates of $\mu (G)$.
 
Let $Z=Z_I\subset \ft$ be the {\it grading element}
corresponding to $\sum_{i_s\in I}\a_{i_s}$. The grading element $Z_i$ for a simple root $\a_{i}$ has the  property that $Z_i(\a_j)=\d^i_j$. In general $Z=\sum_{i_s \in I} Z_{i_s }$. Thus, if   $(c^{-1})$ 
denotes the {\it inverse of the Cartan matrix}, then given a weight $\nu=\sum\nu^j\o_j$,  
\be\label{charelteqn}
Z(\nu)=\sum_{{1\le j \le r}\atop{i_s \in I}}\nu^j(c\inv)_{j,i_s} \, .
\ene
The grading element induces a $\bZ$-grading of $\fg = \oplus_{-k}^k \fg_k$.  To determine $k$ in the case $P=P_{\a_i}$ is a maximal parabolic, let  $\tilde\a=\sum m_j\a_j$ denote
the highest root, then $k=m_i$.

The module $U$ inherits a $\BZ$-grading 
$$
U=U_{Z(\l)}\op U_{Z(\l)-1}\op\cdots\op U_{Z(\l)-f} \, .
$$
The $U_j$ are eigen-spaces for $Z$.
This grading is compatible with the action of $\tilde \fg$: $\mu(\tilde\fg_i).U_j\subset U_{i+j}$. 
We   adopt the notational convention   of shifting the grading
on $U$  to begin
at zero. 
The component $U_0$ (formally named $U_{Z(\l)}$) is one dimensional, and
corresponds to the highest weight line of $U$, and $G \cdot \bP U_0 = G/P \subset \bP U$. (The 
labeling of the grading on $\fgl(U)=U^*\ot U$
is independent of our shift convention.)

Note, in particular, that the vector space 
$\hat T_{[\tId]}(G/P)/\hat{\tId}\simeq \fg/\fp$
is graded from $-1$ to $-k$.  The osculating grading on $U$ induces gradings of
$\fgl(U)$, $\fg$ and $\fg^\perp$.
In   Examples \ref{ograss} and \ref{e8p3} the summands in $T_xG/P$ appearing are in order from $-1$ to $-k$

We write
$$
\fgl(U)=\bigoplus_{s,j} \fgl(U)_{s,j}
$$
where the first index refers to the osculating grading (\S\ref{oscfiltsect})
induced by $G/P\subset \BP U$ and the second the root grading. We adopt the notational convention
$$
\fgl(U)_j=\bigoplus_{s } \fgl(U)_{s,j} \, ;
$$
so
if there is only one index, it refers to the root grading. Note that  the grading of $\fgl(U)$ is indexed by integers $-f\hd f$.   
\medskip

\subsection{Examples of tangent spaces and osculating filtrations of homogeneous varieties}

\begin{example} Consider $G(k,V)\subset \BP \La k V=\BP U$. Fix $E\in G(k,V)$. Then
the osculating sequence is
$$U_0=\La k E\subset (\La {k-1}E\ww V)\subset
(\La {k-2}E\ww \La 2V)\subset \cdots\subset (\La 1 E\ww \La{k-1}V)
\subset\La k V=U.
$$
\end{example}

\medskip

\noindent {\it Remark.}
The only nonzero Fubini forms of a homogeneously embedded CHSS are the fundamental forms.  For the adjoint varieties, the only nonzero
Fubini forms are $F_{2,2},F_{2,3},F_{2,4}$.
\medskip

One definition of $G/P\subset \BP V$ being cominuscule is that 
$T_{Id}(G/P)$ is an irreducible $P$-module. Here are some examples
describing  tangent spaces and osculating
sequences of non-cominuscule varieties.

\begin{example}\label{ograss}
For orthogonal Grassmannians $G_Q(k,V)\subset \BP \La k V$ (assume $k<\frac 12\tdim V$),
\begin{align*} (T_E(G_Q(k,V)))_{-1}&=E^*\ot (E\upperp/E)\\
  (T_E(G_Q(k,V)))_{-2}&=   \La 2 E^*,
\end{align*}
 where the $\perp$ refers
to the $Q$-orthogonal complement, $gr$ to the associated
graded vector space of the filtered vector space $T_EG_Q(k,V)$.
Note that $E\subset E\upperp$ because $E$ is isotropic.
\end{example}

\begin{example}\label{e8p3}
For the $89$ dimensional variety $(E_8/P_3) \subset \BP V_{\o_3}=\pp{6696999}$
\begin{align*}
T_{-1}&=U\ot \La 2 W\\
T_{-2}&=  \La 4 W\\
T_{-3}&= U\ot \La 6 W\\
T_{-4}&=  W 
\end{align*}
where $U=\BC^2$ is the standard representation of $A_1$
and $W=\BC^7$, the standard representation of $A_6$.
\end{example}

\medskip

\noindent {\it Remark.} For those familiar with Dynkin diagrams,
it is possible to obtain $T_{-1}$ and $T_{-f}$ pictorially, where
$T_{-f}$ is the last summand. For simplicity assume $P$ is maximal,
take the Dynkin diagram for $\fg$, delete the node
for $P$, and mark the adjacent nodes with the multiplicity of the
bond assuming an arrow points towards the marked note, otherwise
just mark with multiplicity one.

\begin{center}
\setlength{\unitlength}{3mm}
\begin{picture}(20,7)(13,.5)
\multiput(8,5)(2,0){5}{$\circ$}
\multiput(8.5,5.4)(2,0){4}{\line(1,0){1.6}} 
\put(16.4,5.5){\line(1,1){1.2}} 
\put(16.4,5){\line(1,-1){1.2}}
\put(17.6,6.5){$\circ$}  
\put(17.6,3.5){$\circ$}  
\put(8,2){$X=G_Q(4,12)$}
\put(14,5){$\bullet$}

\put(21,5){$\lra$}

\multiput(27,5)(2,0){2}{$\circ$}
\put(31,5){$\bullet$}
\multiput(27.5,5.4)(2,0){2}{\line(1,0){1.6}}
\put(34,5){$\bullet$}
\put(34.4,5.5){\line(1,1){1.2}} 
\put(34.4,5){\line(1,-1){1.2}}
\put(35.6,6.5){$\circ$}  
\put(35.6,3.5){$\circ$}  
\put(26,2){$T_{-1}=\BC^4\ot \La 2\BC^4$}
\end{picture}
\end{center}
 
The last filtrand, $T_{-f}$ is obtained by marking the node(s) associated
to the adjoint representation of $\fg$ and taking the dual module
in the new diagram. These coincide iff $G/P$ is CHSS.

\medskip

\subsubsection{Symplectic Grassmannians}
Here is the full osculating sequence and some details for
the symplectic Grassmannians taken from \cite{LM0}  (where many other cases with $P$ maximal
 may be found as well):

Let  $G_\o(k,2n)=C_n/P_k$ denote
  the Grassmanian of $k$-planes
isotropic for a symplectic form. Its minimal embedding is to 
$V_{\o_k} = \Lambda^{\langle k\rangle }V =
\Lambda^kV/(\Omega\we\Lambda^{k-2}V)$, 
 the $k$-th reduced exterior power of $V=\bcc{2n}$,
where $\Omega\in \La 2 V$ denotes the symplectic form on $V^*$ induced from $\o\in \La 2 V^*$. 

Let $E\in G_{\o}(k,V)$ and write $U=E\upperp/E$.  A straightforward computation 
shows that   $V_{\o_k}$   
has the following decomposition  as an  $H=SL(E)\times
Sp(U)$-module:
$$
V_{\o_k}= \Lambda^{\langle k\rangle }V =
\bigoplus_{a,b}\Lambda^bE^*\ot\Lambda^{a+b}E^*\ot\Lambda^{\langle
a\rangle}U.
$$
(Here $\tdim E=k$ and $\tdim U=2n-2k$.)
Note that $U$ is endowed with a symplectic form induced 
by the symplectic form on $V = \CC^{2n}$.

\begin{proposition}\cite{LM0} Let $E\in G_{\o}(k,2n)$, 
let $E\upperp\supset E$ denote the $\Omega$-orthogonal complement
to $E$ and let $U =
E^{\perp}/E$. Then the tangent  space and normal spaces of $G_{\o}(k,2n)$
are, as
$G_0$-modules,
\begin{align*}
T_{-1}  &  =    E^*\ot U,\\
T_{-2} &  =    S^2 E^*,\\
N_{2,-2} &  =    \Lambda^2 E^*\ot \Lambda^{\langle 2\rangle} U\\
 N_{2,-3} &  =   
S_{21} E^*\ot U\\
 N_{2,-4} &  =    S_{22}E^*, \\
N_{p,*} &  =    \bigoplus_{a+b+c = p}\Lambda^{\langle a\rangle}U\ot S_{2^{b-c}1^{a+2c}}E^* \\
&  =   \bigoplus_{d+e = p}\Lambda^dU\ot S_{2^e1^d}E^*.
\end{align*}
 $S_{\pi}E$ is the irreducible $GL(E)$ module associated to the partition $\pi$.
(Here $S_{2^a1^b}E$ corresponds to the partition with $a$ $2$'s and $b$ $1$'s.)
In particular, the length of the osculating sequence is equal to 
$k+1$, the last non zero term being $N_k\simeq\Lambda^k(\CC\op U)$.
\end{proposition}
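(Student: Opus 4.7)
The plan is to read the proposition off the $\mathbb{Z}$--grading of $V_{\o_k}=\Lambda^{\langle k\rangle}V$ induced by the grading element $Z=Z_k$ of \S\ref{rootgradingsect}, together with the standard $SL(E)\times Sp(U)$-decomposition of $\Lambda^kV$.

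First I would fix a splitting of the filtration $E\subset E^\perp\subset V$ so that, as a $G_0=GL(E)\times Sp(U)$-module,
\[
V \;\simeq\; E\op U\op E^{*},
\]
where the identification $V/E^\perp\simeq E^*$ uses the symplectic form. The grading element $Z_k$ satisfies $Z_k(e_i)=1$ for $i\le k$ and $Z_k(e_i)=0$ for $i>k$ (since $e_i=\a_i+\cdots+\a_{n-1}+\tfrac12\a_n$ in type $C_n$), so $Z_k$ acts by $+1,0,-1$ on $E,U,E^*$ respectively. The symplectic form decomposes as $\Omega=\Omega_E+\Omega_U$ with $\Omega_E\in E^*\we E\subset\La 2V$ the duality pairing and $\Omega_U\in\La 2U$ the induced symplectic form on $U$; both have $Z_k$--weight $0$.

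Next I would expand
\[
\La kV \;=\; \bigoplus_{a+b+c=k}\La aE\ot\La bU\ot \La cE^{*},
\]
on which $Z_k$ acts by eigenvalue $a-c$. Because the highest weight $\o_k=e_1+\cdots+e_k$ of $V_{\o_k}$ is realised in the $\La kE$ summand (so $Z_k$ is maximal, equal to $k$, there), the osculating grade (shifted so $U_0$ sits at $0$) of the $(a,b,c)$--summand is $-(b+2c)$, establishing where each piece lives in the filtration $U_0\subset\cdots\subset U_f$. In particular the osculating sequence has length $1+\max(b+2c)=1+2k-b_{\min}$: subject to $a\ge 0$ the maximum of $b+2c$ over $(a,b,c)$ with $a+b+c=k$ is $2k$ (at $c=k$), but the $\Omega_E$--reduction described below collapses this down to produce final grade $k$ (not $2k$), realised by $\La k(\BC\op U)$.

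The key step is to pass from $\La kV$ to the reduced power $\Lambda^{\langle k\rangle}V=\La kV/(\Omega\we\La{k-2}V)$ and match what remains with the claimed formulas. Wedging with $\Omega_U$ reduces each $\La bU$ factor to $\La{\langle b\rangle}U$; wedging with $\Omega_E$ ties summands of the same $b$ and same $a-c$ together, effectively replacing the pair $\La aE\ot\La cE^*$ (together with its $\Omega_E$--images from lower $c$) by the Schur summand $S_{2^c1^{k-a-c}}E^*$ after using $\La jE\simeq\det E\ot\La{k-j}E^*$ and suppressing the $\det E$ twist (which is a global $GL(E)$--character and is ultimately removed by the reduction to $SL(E)$ and the symplectic normalisation). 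Carrying this out grade by grade produces the tensor patterns $\La dU\ot S_{2^e1^d}E^*$ with $d+e=p$ in grade $-p$, and verifies the formulas: grade $-1$ has only $(b,c)=(1,0)$, giving $E^*\ot U$; grade $-2$ has $(b,c)=(2,0)$ and $(0,1)$, but the $\Omega_E$--reduction removes the skew piece of $\La{k-1}E\ot E^*\simeq\det E\ot(E^*\ot E^*)$, leaving only $S^2E^*$; the higher $N_{p,-q}$ patterns follow analogously. Finally, collecting all pieces in top grade $-k$, the combinatorial identity $\bigoplus_d\La dU\otimes\La dE^*\simeq \La k(\BC\op U)$ (after the $\det E$ twist) identifies $N_k$ with $\La k(\BC\op U)$.

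The main obstacle is step three: keeping track of how the $\Omega_E$--reduction braids the summands with different $c$ but the same $b$ and the same $Z_k$--grade, since these correspond to \emph{filtered} rather than graded combinations inside $V_{\o_k}$. This is essentially a Pieri-type computation, and can be verified either by a direct character calculation (comparing the Weyl character of $\Lambda^{\langle k\rangle}V$ restricted to $GL(E)\times Sp(U)$ with the proposed decomposition) or by highest-weight-vector bookkeeping in each $Z_k$--graded piece, which is routine once the grading on $V$ is set up.
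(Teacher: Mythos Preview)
The paper does not prove this proposition; it is quoted from \cite{LM0}, and the only ingredient supplied here is the $SL(E)\times Sp(U)$--decomposition of $V_{\o_k}$ displayed just before the statement. So there is no proof in the paper to compare your argument against, beyond that decomposition formula.

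Your setup is correct: the splitting $V=E\oplus U\oplus E^*$, the $Z_k$--eigenvalues $+1,0,-1$, the expansion of $\La kV$ into pieces $\La aE\ot\La bU\ot\La cE^*$ with $a+b+c=k$, and the computation that the $Z_k$--grade (shifted) of such a piece is $-(b+2c)$. The low-degree check of the $\Omega_E$--reduction (killing the skew part of $\La{k-1}E\ot E^*$) is also right.

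The real gap is that you identify the $Z_k$--grade $-(b+2c)$ with the osculating degree. These are two different gradings, and the whole point of the bigrading $N_{p,-q}$ in the proposition is that they differ: $p$ is the osculating index and $-q$ the root grade, with $-q$ ranging from $-p$ to $-2p$ for fixed $p$. Your sentence ``the $\Omega_E$--reduction collapses this down to produce final grade $k$ (not $2k$)'' is where the confusion shows: the $\Omega$--reduction is homogeneous for $Z_k$ and does not change the range of root grades at all --- the root grading on $V_{\o_k}$ still runs from $0$ to $-2k$ (the piece $\La kE^*$ survives). What is true is that the \emph{osculating} filtration has length $k+1$, for a different reason.

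The correct observation is that the osculating degree of the $(a,b,c)$--summand is $k-a=b+c$, not $b+2c$. This is because the osculating filtration is generated by iterated application of $\fg_-$ to $U_0=\La kE$, and both $\fg_{-1}$ (via $E\to U$) and $\fg_{-2}$ (via $E\to E^*$) act on a monomial in $\La kV$ by replacing at most one $E$--factor, hence lower $a$ by at most one. Thus $U_p^{\mathrm{osc}}$ is (the image in $V_{\o_k}$ of) the span of the pieces with $a\ge k-p$, and $N_p$ corresponds to $a=k-p$. This gives the length $k+1$ immediately and explains why, for instance, root grade $-2$ contains both a tangent piece $T_{-2}$ (from $(k-1,0,1)$, $b+c=1$) and a normal piece $N_{2,-2}$ (from $(k-2,2,0)$, $b+c=2$). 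Your grade-$-2$ discussion only accounts for $T_{-2}$ and silently drops the $(k-2,2,0)$ contribution, which is a symptom of the same conflation.

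Once you separate the two gradings this way, the remainder of your plan --- carrying out the $\Omega_E$-- and $\Omega_U$--reductions on each $a=k-p$ stratum and matching the survivors to the Schur functors $S_{2^e1^d}E^*$ --- is the right computation and does go through.
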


\begin{corollary}\cite{LM0} 
$$
B (II_{ G_{\o}(k,2n), E})= 
\ppp \overline{\{e\ot u\oplus e^2 \mid e\in E^*\backslash \{0\},\;u\in U
\backslash \{0\}\} }. 
$$ 
This set of asymptotic directions contains an open   dense $P$-orbit, 
the boundary of which is the union of the two (disjoint) closed 
$H$-orbits 
$$Y_1\simeq \PP^{k-1}\times\PP^{2n-2k-1}\subset \PP(T_{-1})\quad
and \quad Y_2\simeq v_2(\PP^{k-1})\subset \PP(T_{-2}).$$
\end{corollary}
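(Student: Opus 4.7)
\medskip

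\noindent\textbf{Proof plan.} The plan is to exploit the $G_0 = GL(E) \times Sp(U)$-equivariance of $II$ to decompose the problem into three scalar-determined pieces, and then compute the common zero locus.

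First I would split $II$ according to the bigrading: since $T = T_{-1}\oplus T_{-2} = (E^*\otimes U)\oplus S^2 E^*$ and $N_2 = N_{2,-2}\oplus N_{2,-3}\oplus N_{2,-4}$ from the proposition, the second fundamental form decomposes into three $G_0$-equivariant pieces
\begin{align*}
II^{1,1} &: S^2(E^*\otimes U) \longrightarrow \Lambda^2 E^* \otimes \Lambda^{\langle 2\rangle} U,\\
II^{1,2} &: (E^*\otimes U) \otimes S^2 E^* \longrightarrow S_{21}E^*\otimes U,\\
II^{2,2} &: S^2(S^2 E^*) \longrightarrow S_{22}E^*.
\end{align*}
By Schur's lemma each target has multiplicity one in its source, so each map is the canonical projection (up to a nonzero scalar that is nonzero because $G_\omega(k,2n)$ is linearly nondegenerate): $II^{1,1}$ is the composition of the natural inclusion $\Lambda^2(E^*\otimes U) \supset \Lambda^2 E^* \otimes S^2 U$... (here one needs to use that $\Omega$-contraction quotients $\Lambda^2 U$ by $\bC\cdot\Omega$), $II^{2,2}$ is the $S_{22}$-projection of the multiplication map $S^2 E^* \cdot S^2 E^* \hookrightarrow S^4 E^* \oplus S_{22} E^*$, and $II^{1,2}$ is the tensor product of $\text{id}_U$ with the $S_{21}$-projection of the multiplication $E^* \otimes S^2 E^* \hookrightarrow S^3 E^* \oplus S_{21} E^*$.

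Next I verify the inclusion $\supseteq$. Take $v = e\otimes u + e^2$. For $II^{1,1}$: the tensor $v_{-1} = e\otimes u$ has rank one, so $v_{-1}\wedge v_{-1} = 0$ already in $\Lambda^2(E^*\otimes U)$. For $II^{2,2}$: $e^2 \cdot e^2 = e^4 \in S^4 E^*$ is totally symmetric, so its $S_{22}$-component is zero. For $II^{1,2}$: the element $e\otimes e^2 \in E^*\otimes S^2 E^*$ equals $e\otimes e\otimes e$, which is totally symmetric, hence has zero $S_{21}$-component. So $II(v,v)=0$ and the claimed set lies in $B(II)$.

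For the reverse inclusion, write a general $v = v_{-1}+v_{-2}$ and analyze the three conditions. Vanishing of $II^{2,2}(v_{-2},v_{-2})$ forces $v_{-2}\in S^2 E^*$ to be of rank $\leq 1$, i.e.\ $v_{-2}=e^2$ for some $e\in E^*$ (or zero). When $e\neq 0$, the vanishing of $II^{1,2}(v_{-1},e^2)$—which is the $S_{21}$-projection of $v_{-1}\otimes e^2$ tensored with the $U$-factor—together with the $II^{1,1}$ condition forces $v_{-1}$ to be divisible by $e$, i.e.\ $v_{-1}= e\otimes u$ for some $u\in U$. In the degenerate case $v_{-2}=0$, only the $II^{1,1}$ condition on $v_{-1}\in E^*\otimes U$ remains, and one checks that the $\Lambda^2 E^*\otimes\Lambda^{\langle 2\rangle}U$-vanishing of $v_{-1}\wedge v_{-1}$ forces $v_{-1}$ to have rank one (the $\Omega$-quotient cannot absorb a rank $\geq 2$ obstruction because $\Omega$ has a single direction while $\Lambda^2 E^*$ has dimension $\binom{k}{2}$); these are exactly the boundary limits of $[e\otimes u + \epsilon e^2]$ as $\epsilon\to 0$ after rescaling.

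Finally, the orbit analysis: the Levi $G_0 = GL(E)\times Sp(U)$ acts transitively on nonzero $e\in E^*$ and, fixing $e$, transitively on nonzero $u\in U$, giving an open $G_0$-orbit on $\{e\otimes u + e^2\mid e,u\neq 0\}$; its $P$-orbit is also open and dense in $B(II)$. The two closed orbits arise as the degenerations: $u\to 0$ (after projective rescaling $(e,\lambda u)\to(e,0)$) gives $[e^2]\in v_2(\bP E^*)=Y_2\subset \bP(T_{-2})$, while $e\to 0$ in the appropriate scaling sends $[e\otimes u + e^2]$ to $[e\otimes u]\in Y_1 = \bP E^*\times \bP U\subset \bP(T_{-1})$ (the Segre of rank-one elements). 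That $Y_1, Y_2$ are disjoint and closed is immediate from the grading, since they lie in different irreducible summands of $\bP T$. The main obstacle, as usual, is the reverse inclusion: one must carefully extract the rank constraints on $v_{-1}$ from the intertwined vanishing of $II^{1,1}$ and $II^{1,2}$, using that the cross-term kills the freedom that the $\Omega$-quotient appears to introduce in $II^{1,1}$.
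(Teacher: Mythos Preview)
The paper does not prove this Corollary; it is simply quoted from \cite{LM0}. So there is no in-paper argument to compare against, and your proposal stands or falls on its own.

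Your strategy---split $II$ along the bigrading $S^2T=S^2T_{-1}\oplus(T_{-1}\otimes T_{-2})\oplus S^2T_{-2}$, identify each piece as a Schur projection, verify $\supseteq$ on decomposable tensors, then run the reverse inclusion degree by degree---is correct and is the standard approach. Two points deserve sharpening. First, in the case $v_{-2}=e^2$ with $e\neq 0$ you do not need $II^{1,1}$ at all: the map $E^*\to S_{21}E^*$, $f\mapsto(f\otimes e^2)_{S_{21}}$, has kernel exactly $\langle e\rangle$, so $II^{1,2}(v_{-1},e^2)=0$ already forces $v_{-1}\in\langle e\rangle\otimes U$.

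Second, and more seriously, your justification in the degenerate case $v_{-2}=0$ is not a valid argument. A dimension count comparing $\bC\Omega$ with $\Lambda^2E^*$ does not rule out rank $\ge 2$. Writing $v_{-1}=\sum_{i=1}^r e_i\otimes u_i$ with both families independent, the $\Lambda^2E^*\otimes\Lambda^2U$ component of $v_{-1}\cdot v_{-1}$ is $\sum_{i<j}(e_i\wedge e_j)\otimes(u_i\wedge u_j)$; vanishing modulo $\Lambda^2E^*\otimes\bC\Omega$ forces each $u_i\wedge u_j\in\bC\Omega$. The real obstruction is that $u_i\wedge u_j$ is \emph{decomposable} while $\Omega$ is not, which requires $\dim U\ge 4$. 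When $\dim U=2$ (i.e.\ $n=k+1$) one has $\Lambda^{\langle 2\rangle}U=0$, so $N_{2,-2}=0$, the condition $II^{1,1}=0$ is vacuous, and all of $\bP(T_{-1})$ lies in $B(II)$; a dimension count then shows $B(II)$ strictly contains the asserted closure. So your argument, and indeed the statement as written, should be read under the hypothesis $\dim U\ge 4$.
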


\medskip

\subsubsection{The bigrading for adjoint varieties}
For adjoint varieties, $T_xX^{ad}_G$ has a two step filtration, with
the hyperplane being the first filtrand, and the osculating
sequence is simply $\hat x\subset \hat T\subset U$.  
 The induced bi-grading on $\fgl(U)_{\textrm{osc},\textrm{alg}}$
is indicated in the table below.
\begin{center}\renewcommand{\arraystretch}{1.3}
\begin{tabular}{c||c|c|c|c|c|c}
    & $\hat x^*$ & $T_{-1}{}^*$ & $T_{-2}{}^*$ & $N_{-2}{}^*$ & $N_{-3}{}^*$ 
    & $N_{-4}{}^*$ \\  \hline \hline
 $\hat x$ & 
 (0,0) & (-1,1) & (-1,2) & (-2,2) & (-2,3) & (-2,4) \\ \hline
 $T_{-1}$ & 
 (1,-1) & (0,0) & (0,1) & (-1,1) & (-1,2) & (-1,3) \\ \hline
 $T_{-2}$ & 
 (1,-2) & (0,-1) & (0,0) & (-1,0) & (-1,1) & (-1,2) \\ \hline
 $N_{-2}$ & 
 (2,-2) & (1,-1) & (1,0) & (0,0) & (0,1) & (0,2) \\ \hline
 $N_{-3}$ & 
 (2,-3) & (1,-2) & (1,-1) & (0,-1) & (0,0) & (0,1) \\ \hline
 $N_{-4}$ & 
 (2,-4) & (1,-3) & (1,-2) & (0,-2) & (0,-1) & (0,0)
\end{tabular}
\end{center}
In all cases $T_{-1},T_{-2}$ may be determined by the remark above
($T_{-2}$ is the trivial module as there is no node left to mark),
and $N_{-2}=I_2(Y)^*$, $N_{-3}=I_3((\t(Y))_{sing})$ and $N_{-4}$ is
trivial (corresponding to the quartic generating $I_4(\t(Y))$).

\section{Lie algebra cohomology and Kostant's theory}\label{Liealgcohsect}
\label{kosapply}\label{kossect}\label{reconcilesect}

Let $\fl$ be a Lie algebra and let $\Gamma$ be an $\fl$-module.
Define maps
$$
\partial^{j}: \La j\fl^*\ot \Gamma \ra \La{j+1}\fl^*\ot \Gamma
$$
in the only natural way possible respecting the Leibniz rule.
This gives rise to a complex and we
  define $H^k(\fl,\Gamma):=\tker\partial^k/\tim \partial^{k-1}$.
We will only have need of $\partial^0$ and $\partial^1$ which are defined explicitly
as follows:
if $X\in \Gamma$ and $v,w\in \fl$, then
$$\partial^0(X)(v)=v.X, 
$$
 and 
  if $\a\ot X\in \La 1\fl^*\ot \Gamma$,  then
\begin{equation}\label{del1}
\partial^1(\a\ot X)(v\ww w)=
\a([v,w])X+\a(v)w.X - \a(w) v.X
\end{equation}

Now let  $\fl$ be a graded Lie algebra and $\Gamma$   a graded
$\fl$-module. The
chain complex and Lie
algebra cohomology groups inherit  gradings as well.
Explicitly,
$$
\partial^{1}_{d}: \oplus_{i}(\fl_{-i})^* \ot \Gamma_{d-i}
\ra \oplus_{j\leq m}(\fl_{-j})^*\ww (\fl_{-m})^*\ot \Gamma_{d-j-m} \, .
$$

\medskip
 
Kostant \cite{kostant} shows that under the following circumstances one can
compute $H^k(\fl,\Gamma)$ combinatorially:
\begin{enumerate}
\item $\fl=\fn\subset\fp\subset  \fg$ is the nilpotent subalgebra of
a parabolic subalgebra of
a semi-simple Lie algebra $\fg$.

\item $\Gamma$ is a $\fg$-module.
\end{enumerate}

Under these conditions, letting $\fg_0\subset \fp$ be the the
(reductive) Levi factor of $\fp$, $H^j(\fn,\Gamma)$ is naturally
a $\fg_0$-module. Kostant shows that for any irreducible module $\Gamma$
it is essentially trivial to compute $H^1(\fn,\Gamma)$, one just
examines certain simple reflections in the Weyl group. However,
in our situation, where we need to compute $H^1(\fg,\fg\upperp)$, there
may be numerous components to $\fg\upperp$, and moreover we would
like to avoid a case by case decomposition. Here the beauty of
the grading element comes in, because it is easy to prove that
in many situations $H^1_d(\fg,\Gamma)$ is zero in positive
degree.
This is well documented
in \cite{Y,HY,LRrigid} among other places.

\section{From the Fubini EDS to Filtered EDS}\label{filtersect}

I now explain how we were led to work with filtered EDS in an effort
to use Lie algebra cohomology to determine rigidity of homogeneous varieties.

\subsection{Problem 1: Osculating v.s. root gradings}\label{problem1}
As mentioned above, for CHSS, the osculating grading coincides with the root grading, but
for all other homogeneously embedded homogeneous varieties this fails.
Thus to have any hope to exploit Lie algebra cohomology, we need
to work with an EDS that respects the root grading.

From now on we will work on $SL(U)\subset GL(U)$ which will not
change anything regarding our study of rigidity of subvarieties of $\BP U$.
Define the $(I_{p},J_{p})$ system on
$SL(U)$ by
$I_p=\{ \o_{\gp{\leq p}}\}$, $J_p=\{ I_p, \o_{\fg_-}\}$.

In specific examples, after a short calculation, the $k$-th order Fubini system can be shown to be
strictly stronger than some $(I_p,J_p)$ system (where of course $p$ depends on $k$).
At the moment we have no general method of determining this, but we do so
uniformly for adjoint varieties in \cite{LRrigid}. In summary, this problem
is easy to resolve in specific cases or even classes of cases, but work
remains to resolve the general case.

Here is the proof in the adjoint case:

\begin{proposition}\label{prop:Fub3=>-1}
Every integral manifold of the third-order Fubini system $(I_{\tFub_3},J_{\tFub_3})$ for
a given adjoint variety is an integral
manifold of the $(I_{-1},J_{-1})$ system for the
same adjoint variety.
\end{proposition}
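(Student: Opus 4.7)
The plan is to show that matching the second and third Fubini forms to those of $X^{ad}_G \subset \BP \fg$ forces the $\fg^\perp$-valued part of the Maurer-Cartan form to vanish in all root grades $\leq -1$. Work on $SL(\fg)$ with the bi-graded decomposition of $\fgl(\fg) = \bigoplus_{s,j}\fgl(\fg)_{s,j}$ by (osculating, root) degree. An integral manifold $M$ of $(I_{\tFub_3}, J_{\tFub_3})$ carries an adapted frame bundle on which $\omega_{L^*\ot N}=0$, $\omega_{T^*\ot N}=F_2(\omega_{L^*\ot T})$, and the analogous third-order relation fixes $F_3$; the conclusion to be established is that $\omega_{\fg^\perp_{-1}}$ and $\omega_{\fg^\perp_{-2}}$ both pull back to $0$ on $M$.

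First I would read off from the bigrading table in \S\ref{gradingsect} exactly which bidegrees $(s,j)$ contain nonzero pieces of $\fg^\perp_{-1}$ and $\fg^\perp_{-2}$. For adjoint varieties $\fg$ is $5$-step graded ($j=-2,\dots,2$) and the osculating filtration has only three filtrands ($N_{-2}, N_{-3}, N_{-4}$), so $\fg^\perp$ in root grade $-1$ lives in the bidegrees where the $\fg$-component is already captured by $\omega_{L^*\ot T}$ or $\omega_{T^*\ot N}$, and similarly for root grade $-2$. This reduces the claim to verifying vanishing of finitely many components of the MC form.

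Next I would differentiate. The calculation \eqref{f2calc} yields
\begin{equation*}
(\omega_{\fr^\perp}\cdot F_2)\wedge\omega_{L^*\ot T} \equiv 0 \mod I_{\tFub_2},
\end{equation*}
which isolates the pieces of $\omega$ lying in $\fgl(\fg)_{0,*}\cap\fg^\perp$; these are exactly the grade $j=0$ perpendicular components, and the equation forces them to vanish (using that $F_2$ is a $G_0$-invariant tensor with stabilizer $\fr = \fg_0$). Then I would perform the analogous calculation from the order-three relation fixing $F_3$: differentiating the defining identity for $F_3$ and separating by root bidegree produces an identity whose content is that a certain coboundary on $\omega_{\fg^\perp_{-1}}$ (and in a second round on $\omega_{\fg^\perp_{-2}}$) must vanish. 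The combinatorial input required is Kostant's vanishing $H^1_d(\fg_-,\fg^\perp)=0$ for $d<0$ (applied to $\fg^\perp$ regarded as a $\fg$-module), which guarantees that these homological equations have only the trivial solution.

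The main obstacle is the bookkeeping: one must check that the order-$2$ and order-$3$ conditions, when decomposed according to the bi-grading, actually exhaust every bidegree in which $\fg^\perp_{\leq -1}$ has a nonzero component, so that nothing is left uncontrolled. For adjoint varieties the Dynkin-diagram recipes of \S\ref{rootgradingsect} identify $T_{-1}, T_{-2}$ and the normal bundle pieces uniformly, which combined with Kostant's theorem converts the bookkeeping into a finite list of weight computations on the adjoint module---tractable in a single uniform argument, as in \cite{LRrigid}. Once the vanishing of $\omega_{\fg^\perp_{-1}}$ and $\omega_{\fg^\perp_{-2}}$ is in hand, $M$ is an integral manifold of $I_{-1}$, and the independence condition $\omega_{\fg_-}$ is automatic from $\omega_{L^*\ot T}\ne 0$ together with the $F_2$-relation producing the rest of $\omega_{\fg_-}$.
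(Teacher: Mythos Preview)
Your plan diverges from the paper's proof and has a genuine gap at the key step. The paper's argument is entirely elementary and uses no Lie algebra cohomology: first, all components of $\omega_{\fg^\perp}$ sitting in blocks with \emph{positive osculating grade} (which includes everything in root grade $\leq -3$, as the bigrading table shows) vanish directly from the second-order Fubini relation $\omega_{T^*\ot N}=F_2(\omega_{L^*\ot T})$, via injectivity of $F_2$ on each homogeneous piece. Second, the table shows that only four blocks in $\fg^\perp_{*,<0}$ remain, all of osculating grade zero: the three $(0,-1)$ blocks $T_{-2}\ot T_{-1}^*$, $N_{-3}\ot N_{-2}^*$, $N_{-4}\ot N_{-3}^*$ and the single $(0,-2)$ block $N_{-4}\ot N_{-2}^*$. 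Their $\fg^\perp$-parts are killed \emph{directly} by the relevant $S^3T^*\ot N_j$ components of $F_3$ (cf.\ \cite[(3.5)]{IvL}), which pin those forms to their model values. No differentiation of the $F_3$ relation, and no homological input, is needed.

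Your proposal instead differentiates the $F_3$ relation and asserts the outcome is a $\partial^1$-coboundary equation on $\omega_{\fg^\perp_{-1}}$, $\omega_{\fg^\perp_{-2}}$, to be resolved by Kostant vanishing $H^1_d(\fg_-,\fg^\perp)=0$ for $d<0$. You have not established that the Spencer differential arising here coincides with $\partial^1$; indeed the whole point of \S\ref{problem2} is that it does \emph{not}, and the filtered systems $(I^f_p,\Omega)$ are introduced precisely to repair this mismatch. The Lie algebra cohomology enters the paper only in the prolongation analysis of those filtered systems (Theorem \ref{mainthm}), in degrees $d\geq p+2>0$, not in the reduction from Fubini to $(I_{-1},J_{-1})$, and not in negative degree. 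Your step invoking $H^1_d$ for $d<0$ is therefore both unjustified and aimed at the wrong target; the proposition is a direct bookkeeping argument with $F_2$ and $F_3$, not a cohomological one.
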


\begin{proof}
Suppose that $\cF \subset SL(U)$ is an integral manifold of third-order Fubini system.    We wish to show that the $\fg^\perp_{*,<0}$--valued component of the Maurer-Cartan form vanishes when pulled-back to $\cF$.  That the $\fg^\perp_{>0,*}$--valued component vanishes is an immediate consequence of the injectivity of the second fundamental form $F_2$ on each homogeneous component.

Referring to the table above, we see that there remain
four blocks of the component of the Maurer-Cartan form 
in $\gp{*,<0}$ to consider: the three $(0,-1)$ blocks $\w_{T_{-2}\ot T_{-1}^*}$, $\w_{N_{-3}\ot N_{-2}^*}$ and $\w_{N_{-4}\ot N_{-3}^*}$; and the singleton $(0,-2)$ block $\w_{N_{-4}\ot N_{-2}^*}$.  The third Fubini form is defined by (3.5) of \cite[\S3.5]{IvL}.  The vanishing of the $\fg^\perp$--component of the first two blocks is a consequence of the $S^3 T_{-1}^* \ot N_{-3}$ component of $F_3$.  (This is the only nonzero component of $F_3$.)  The vanishing of the $\fg^\perp$--component of the third and fourth blocks is given by the $S^3T^* \ot N_{-4}$--component of $F_3$.
\end{proof}

\subsection{Problem 2: Even the  systems defined by the root grading do not  
lead to Lie algebra cohomology}
\label{problem2}
For simplicity we take  $p=-1$ and $k=2$.
Notice that $\fg_{s}^\perp = \fsl(U)_{s}$ for all $s \le -3$.  Abbreviate
$$ \w_{\fsl(U)_s} \ =: \ \w_s \, , $$
so that $\w_{\fg^\perp_s} = \w_{s}$, for all $s \le -3$.  Thus  
\begin{displaymath}
  I_{-1} \ = \ \left\{ 
    \w_{\fg^\perp_{-1}}  \, , \ 
    \w_{\fg^\perp_{-2}} \, , \ 
    \w_{-3} \, , \ \ldots \, , \ \w_{-f} 
  \right\} \, .
\end{displaymath}

The calculations that follow utilize the Maurer-Cartan equation (see \S\ref{projframessect}), and
that  $[\fg,\fg] \subset \fg$ and $[\fg,\fg^\perp] \subset \fg^\perp$.  It is easy to see that $\td \w_{s} \equiv 0$ modulo $I_{-1}$ when $s \le -3$.  Next, computing modulo $I_{-1}$, 
$$
  -\td \, \o_{\gp{-2}}   \equiv  
  \left[  \o_{\fg_{-2}} , \w_{\fg^\perp_0} \right]
$$
and
$$
  -\td \o_{\gp{-1}}   \ \equiv \\   
  \left[ \w_{\fg_{-2}} , \w_{\fg^\perp_1} \right] \ + \ 
  \left[ \w_{\fg_{-1}} , \w_{\fg^\perp_0} \right]  .
 $$
 In order for these two equations to be satisfied, on an integral element we
must have
\begin{align}
\o_{\gp 0}&= \l_{0,1}(\o_{\fg_{-1}})+ \l_{0,2}(\o_{\fg_{-2}})\\
\o_{\gp 1}&= \l_{1,1}(\o_{\fg_{-1}})+ \l_{1,2}(\o_{\fg_{-2}})
\end{align}
for some $\l_{i,j}\in \gp i\ot \fg_{-j}^*$.

Consider the degree two homogeneous component $\d_2$ of the Spencer differential
$\d: A\ot V^*\ra W\ot \La 2 V^*$, where
$A=\gp 1\op \gp 0$, $W=\gp{\leq -1}$ (but we may and will   ignore the first
derived system $\gp{\leq -3}$) $V=\fg_{-1}\op \fg_{-2}$:
\begin{align*}
\d_2: (\gp 1\ot\fg_{-1}^*)\op  (\gp 0\ot\fg_{-2}^*)&\ra 
 (\gp{-1}\ot \fg_{-1}^*\ww \fg_{-2}^*)\op (\gp{-2}\ot \fg_{-2}^*\ww \fg_{-2}^*)
  \\
\l_{1,1}\op \l_{0,2}&\mapsto \{ (u_{-1}\ww v_{-2})
\mapsto [\l_{1,1}(u_{-1}),v_{-2}]+ [u_{-1},\l_{0,2}(v_{-2})] \\
&\ \ \ \op   (x_{-2},y_{-2})\mapsto
[\l_{0,2}(x_{-2}),y_{-2}]+ [x_{-2},\l_{0,2}(y_{-2})] \}
\end{align*}
Here $x_{-2}\in \fg_{-2}$ etc...
This is exactly the Lie algebra cohomology differential
$\partial^1_2$!
Now consider the degree one component $\d_1$
 \begin{align*}
\d_1: (\gp 0\ot\fg_{-1}^*)  &\ra 
 (\gp{-1}\ot \fg_{-1}^*\ww \fg_{-1}^*)\op (\gp{-2}\ot \fg_{-1}^*\ww \fg_{-2}^*)
  \\
\l_{0,1}&\mapsto \{ (u_{-1}\ww v_{-1})\op (x_{-1},y_{-2})
\mapsto [\l_{0,1}(u_{-1}),v_{-1}]+ [u_{-1},\l_{0,1}(v_{-1})]\op  [\l_{0,1}(x_{-1}),y_{-2}]\}
\end{align*}
This fails to be the Lie algebra cohomology differential because
we are \lq\lq missing\rq\rq\ a term $\l_{-1,2}([u_{-1},v_{-2}])$ on
the right hand side.
One can try to \lq\lq fix\rq\rq\ this by
adding in such a term. At first this appears unnatural, but if one
takes into account that there is a natural filtration
on our manifold, it is not unreasonable to weaken
the condition $\o_{\gp{-1}}=0$ to the
condition $\o_{\gp{-1}}|_{T_{-1}}=0$, i.e.,
$\o_{\gp{-1}}=\l_{-1,2}(\o_{\fg_{-2}})$ where $\l_{-1,2}\in \gp{-1}\ot \fg_{-2}^*$
at each point of our manifold.

We make this \lq\lq fix\rq\rq\ precise and natural with the introduction of filtered EDS:

\subsection{The Fix for problem 2: Filtered EDS}

\begin{definition} Let $\Sigma$ be a manifold equipped with a filtration of its
tangent bundle
$T^{-1}\subset T^{-2}\subset \cdots \subset T^{-f}=T\Sigma$.
Define an \emph{$r$-filtered} Pfaffian EDS on $\Sigma$ to be a filtered ideal
$I\subset T^*\Sigma$ whose
integral manifolds are the immersed submanifolds $i: M\ra \Sigma$
such that 
$i^*(I_{u})|_{i^*(T^{u-r})}=0$ for all $u$, with the convention that
$T^{-s}=T\Sigma$ when $-s\leq -f$.
\end{definition}

Another way to view filtered EDS is to consider the ordinary EDS on the sum of the bundles
$I_u\otimes (T\Sigma/T^{u+r})$. In our case these bundles will be trivial with
fixed vector spaces as models.

Define $(I^\textsf{f}_{p},\Omega)$ to be the $(p+1)$-filtered EDS on $GL(U)$ with filtered ideal
  $I^\textsf{f}_{p}:=\o_{\fg^{\perp}_{\leq p}}$ and independence condition
$\Omega$ given by the wedge product of the forms in $\o_{\fg_{-}}$.   We may view this as an ordinary
EDS on 
$$GL(U)\times \left( [\fg^{\perp}_{p}\ot (\fg_{-2}\op \cdots \op \fg_{-k})^*]
\oplus [\fg^{\perp}_{p-1}\ot (\fg_{-3}\op \cdots \op \fg_{-k})^*]
\oplus \cdots \oplus [\fg^{\perp}_{p-k+2}\ot  \fg_{-k}^*] \right)
$$
where, giving $\fg^{\perp}_i\ot \fg_{-j}^*$ linear coordinates $\l_{i,j}$, we
have
\begin{eqnarray} \label{psystem} 
  I^\textsf{f}_p & = &
  \big\{ \ \o_{\gp s} \, , \ s\leq p-k+1 \, ; \quad
  \o_{\gp{p-k+2}}-\l_{p-k+2,k}(\o_{\fg_{-k}}) \, , \\
  & & \nonumber \hspace{10pt}
  \o_{\gp{p-k+3}}-\l_{p-k+3,k}(\o_{\fg_{-k}})-\l_{p-k+3,k-1}(\o_{\fg_{-k+1}}) \, , \ \ldots \\
  & & \nonumber \hspace{10pt}
  \o_{\gp{p}}-\l_{p,k}(\o_{\fg_{-k}})-\cdots -\l_{p,2}(\o_{\fg_{-2}}) \ \big\}
\end{eqnarray}
However, as is explained  below, it is more natural to work in the 
category of filtered EDS.
\medskip

 Returning to the $p=-1$, $k=2$ system,
 the first derived system is $\o_{\leq -4}$.
Computing similarly to above, only now modulo $I^\textsf{f}_{-1}$, we obtain 
\begin{eqnarray}
  - \td \, \w_{-3} & \equiv & \label{eqn:-3}
  \left[ \w_{\fg_{-2}} , \w_{\fg^\perp_{-1}} \right] 
  \ \equiv \
  \left[ \w_{\fg_{-2}} , \lambda_{-1,2}(\w_{\fg_{-2}}) \right]  \, , \\
  -\td \, \o_{\gp{-2}} & \equiv & \label{eqn:-2}
  \left[\o_{\fg_{-2}} , \o_{\gp 0} \right] \ + \ 
  \left[\o_{\fg_{-1}},\w_{\gp {-1}} \right] \ + \
  \left[ \w_{\gp {-1}}, \w_{\gp {-1}} \right]_{\fg\upperp}\\
  & \equiv & \nonumber
  \left[\o_{\fg_{-2}} , \o_{\gp 0} \right] \ + \ 
  \left[\o_{\fg_{-1}},\lambda_{-1,2}(\w_{\fg_{-2}}) \right] \ + \
  \left[\lambda_{-1,2}(\w_{\fg_{-2}}) , 
        \lambda_{-1,2}(\w_{\fg_{-2}}) \right]_{\fg\upperp} \, ,
\end{eqnarray}
  
\begin{equation}\label{eqn:-1}
\renewcommand{\arraystretch}{1.3}
\begin{array}{l}
  -\td \left(\o_{\gp{-1}}-\l_{-1,2}(  \o_{\fg_{-2}})\right)  \ \equiv \\
  \hspace{80pt} 
  \left[ \w_{\fg_{-2}} , \w_{\fg^\perp_1} \right] \ + \ 
  \left[ \w_{\fg_{-1}} , \w_{\fg^\perp_0} \right] \ + \ 
  \left[ \w_{\fg_{-1}^\perp} , \w_{\fg_0} \right] \ + \ 
  \left[ \w_{\fg^\perp_{-1}} , \w_{\fg^\perp_{0}} \right]_{\fg^\perp} \\
  \hspace{80pt} + \ 
  \td \lambda_{-1,2} ( \wedge \w_{\fg_{-2}} ) \ - \
  \lambda_{-1,2} \left(
    \left[ \w_{\fg_{-2}} , \w_{\fg_0} \right] + 
    \left[ \w_{\fg_{_1}} , \w_{\fg_{-1}} \right] +
    \left[ \w_{\fg^\perp_{-1}} , \w_{\fg^\perp_{-1}} \right]_{\fg}
  \right) \\
  \hspace{65pt} \equiv  \
  \left[ \w_{\fg_{-2}} , \w_{\fg^\perp_1} \right] \ + \ 
  \left[ \w_{\fg_{-1}} , \w_{\fg^\perp_0} \right] \ + \ 
  \left[ \lambda_{-1,2}(\w_{\fg_{-2}}) , \w_{\fg_0} \right] \\
  \hspace{80pt} + \
  \left[ \lambda_{-1,2}(\w_{\fg_{-2}}) , 
         \w_{\fg^\perp_{0}} \right]_{\fg^\perp} \ + \ 
  \td \lambda_{-1,2} ( \wedge \w_{\fg_{-2}} )  \\
  \hspace{80pt} - \
  \lambda_{-1,2} \left(
    \left[ \w_{\fg_{-2}} , \w_{\fg_0} \right] + 
    \left[ \w_{\fg_{-1}} , \w_{\fg_{-1}} \right] +
    \left[ \lambda_{-1,2}(\w_{\fg_{-2}}) , \lambda_{-1,2}(\w_{\fg_{-2}}) \right]_{\fg}
  \right)
\end{array}
\end{equation}
Here   $[\cdot , \cdot]_{\fg}$ (resp. $[\cdot,\cdot]_{\fg\upperp}$) denotes
the component of the bracket taking values in $\fg$ (resp. $\fg\upperp$).

Note that if were were to
view the filtered EDS as an  ordinary EDS on $SL(U)\times \gp{-1}\ot \fg_{-2}^*$, the term
$\l_{-1,2}$ is part of the torsion whereas here  
  it is simply part of  the tableau of the   filtered  Spencer differential.

The degree one   homogeneous component of (\ref{eqn:-3},\ref{eqn:-2},\ref{eqn:-1}),
 is as follows:  ${\mathbf\l}_1:=\oplus_{s=-1}^0\l_{s,1-s}$ must be in the kernel of the map
$$
\d_1:
\oplus_{s=-1}^0(\gp{s}\ot \fg^*_{-s-1})
\ra   (\gp{-1}\ot  \fg^*_{-1}\ww \fg^*_{-1})
\op (\gp{-2}\ot \fg^*_{-1}\ww \fg^*_{-2})
$$
defined as follows.  Given $u_{-1},v_{-1}\in \fg_{-1}$, 
\begin{equation}\label{d1der}
\d_1({\mathbf\l}_1)(u_{-1}\ww v_{-1})
=[\l_{0,1}(u_{-1}),v_{-1}]+[u_{-1},\l_{0,1}(v_{-1})]-\l_{-1,2}([u_{-1},v_{-1}]).
\end{equation}
For $u_{-1}\in \fg_{-1},v_{-2}\in \fg_{-2}$
$$
\d_1({\mathbf\l}_1)(u_{-1}\ww v_{-2})
=[\l_{0,1}(u_{-1}),v_{-2}]+[u_{-1},\l_{-1,2}(v_{-2})] \, .
$$
That is,
$\d_1=\partial^1_1$, where $\partial^1_1$ is the Lie algebra cohomology differential 
described in \S\ref{Liealgcohsect}.

Moreover, $\fg^\perp_{1}  = \fn\cap  \fgl(U)_{ 1}$, and the Lie algebra cohomology denominator $\partial^0_{1}(\fg^\perp_{1})$ is the space of admissible normalizations of the prolongation coefficients ${\mathbf \l}_{1}$.  Thus, the vanishing of $H^1_{1}(\fg_-,\fg^\perp)$ implies that normalized integral manifolds of the $(I^f_{-1},\Omega)$ system are in one to one
correspondence with integral manifolds of the $(I^f_{0},\Omega)$ system.

Punch line: by working with filtered EDS and by homogeneous degree
we do obtain Lie algebra cohomology. The vanishing of the Lie algebra cohomology
reduces the system to the $(I_0^f,\Omega)$ system, and vanishing
of the Lie algebra cohomology group $H^1_2(\fg_{-},\fg\upperp)$
moves one to the $(I_1^f,\Omega)$ system etc...  Moreover,
there was nothing special about beginning with $p=-1$. The final result is:

\begin{theorem} \label{transthm}\label{mainthm}
Let $U$ be a complex vector space, and $\fg\subset \fgl(U)$ a represented complex semi-simple Lie algebra.  Let $Z = G/P \subset \bP U$ be the corresponding homogeneous variety  (the orbit of a highest weight line).  Denote the induced $\bZ$-gradings by $\fg = \fg_{-k} \op \cdots \op \fg_k$ and $U = U_0 \op \cdots \op U_{-f}$.  Fix an integer $p \ge -1$, and let $(I_p^\textsf{f},\Omega)$  denote the linear Pfaffian system given by \eqref{psystem}.
If $H^1_d(\fg_{-},\fg\upperp)=0$, for all $d\geq p+2$, then the homogenous variety $G/P$ is rigid for the $(I_p^\textsf{f},\Omega)$ system.
\end{theorem}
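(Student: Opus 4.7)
The plan is to run the Cartan algorithm on $(I_p^\textsf{f},\Omega)$ homogeneously in the root grading, identifying each step with a Lie algebra cohomology calculation. The key observation, already flagged in \S\ref{problem2}, is that within the filtered EDS framework the filtered Spencer complex and Kostant's $\fg_-$--complex with coefficients in $\fg\upperp$ are canonically matched in each homogeneous degree.

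Fix an integer $d \ge p+2$. Using the Maurer--Cartan equation together with $[\fg,\fg]\subset \fg$ and $[\fg,\fg\upperp]\subset\fg\upperp$, compute $d\theta \bmod I^\textsf{f}_p$ for each generator $\theta$ of the system \eqref{psystem}, generalizing the derivation of equations \eqref{eqn:-3}--\eqref{eqn:-1} and \eqref{d1der} that was carried out for the case $p=-1$, $k=2$. Extracting the homogeneous degree-$d$ piece one finds: (i) the degree-$d$ tableau is $A_d = \bigoplus_{s+t=d,\, s\le p,\, t\ge 2} \gp{s}\ot \fg^*_{-t}$, and the filtered Spencer map $\delta_d$ acts on $\boldsymbol{\lambda}_d\in A_d$ by precisely the Lie algebra cohomology differential $\partial^1_d$; (ii) the apparent degree-$d$ torsion vanishes identically as a consequence of the graded Jacobi identity applied to the closed Maurer--Cartan form; (iii) the structure-group freedom in lifting an integral element, encoded infinitesimally by $\gp{d}$ acting through $[\,\cdot\,,\w_{\fg_-}]$, is precisely the coboundary $\partial^0_d$. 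Consequently the moduli space of reduced integral elements in degree $d$ is $\ker\partial^1_d/\mathrm{im}\,\partial^0_d = H^1_d(\fg_-,\fg\upperp)$.

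Now induct on $d$, starting at $d=p+2$. The hypothesis $H^1_d(\fg_-,\fg\upperp)=0$ means that every degree-$d$ prolongation coefficient $\boldsymbol{\lambda}_d$ can be normalized to zero by the admissible gauge, so after normalization the augmented system contains the degree-$d$ portion of $\w_{\gp{p+1}}$ as a differential consequence. In the language of the theorem, vanishing of $H^1_{p+2}$ upgrades $(I^\textsf{f}_p,\Omega)$ to $(I^\textsf{f}_{p+1},\Omega)$. Iterating this step for $d=p+2,p+3,\ldots$, and using that $\fg\upperp$ is bounded in root degree, after finitely many stages one obtains the fully augmented system $\w_{\fg\upperp}=0$ on $SL(U)$. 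By $[\fg,\fg]\subset\fg$ this last system is Frobenius, and its integral leaf through the identity is $\mu(G)\subset SL(U)$, whose projection to $\bP U$ is $G/P$. Because the construction is $SL(U)$--equivariant, every integral manifold of $(I^\textsf{f}_p,\Omega)$ is a translate of $G/P$, which is exactly rigidity.

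The main obstacle is the clean degree-by-degree identification of the filtered Spencer data with Kostant's cochain complex. One must verify both that the apparent torsion is absorbed purely by $\partial^1_d$ rather than leaving behind residual terms that mix the osculating and root gradings, and that the admissible normalizations come from $\fg\upperp$ rather than $\fg$. As \S\ref{problem2} makes clear, the matching fails for the unfiltered $(I_p,J_p)$ system by exactly a missing coboundary term of the form $\lambda_{-1,2}([u_{-1},v_{-1}])$, so the passage to the filtered EDS is essential; the graded Jacobi identity on $\fg$ and the closure of the Maurer--Cartan form are the ingredients that make the matching succeed.
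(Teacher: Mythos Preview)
Your approach is the paper's: compute the structure equations of the filtered system degree by degree in the root grading, identify the filtered Spencer differential with $\partial^1_d$ and the admissible normalizations with $\partial^0_d(\gp{d})$, so that $H^1_{p+2}(\fg_-,\fg\upperp)=0$ upgrades $(I^\textsf{f}_p,\Omega)$ to $(I^\textsf{f}_{p+1},\Omega)$, then iterate until one reaches the Frobenius system $\w_{\fg\upperp}=0$.

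One correction is needed. Your formula $A_d=\bigoplus_{s+t=d,\,s\le p,\,t\ge 2}\gp{s}\ot\fg^*_{-t}$ records only the filtered coefficients $\lambda_{s,t}$ introduced in \eqref{psystem} and omits the ordinary tableau contributions coming from the free forms $\w_{\gp{s}}$ with $p<s\le d-1$. In the paper's worked example at $p=-1$, $d=1$, the component $\lambda_{0,1}$ (with $s=0>p$) is precisely one of these omitted terms, and without it \eqref{d1der} is not the full $\partial^1_1$. The correct degree-$d$ cochain space on which $\delta_d$ acts is $\bigoplus_{s+t=d,\,1\le t\le k}\gp{s}\ot\fg^*_{-t}$, and only on this larger space does $\delta_d=\partial^1_d$ hold and the quotient $\ker\delta_d/\mathrm{im}\,\partial^0_d$ equal $H^1_d(\fg_-,\fg\upperp)$. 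With this amendment your argument matches the paper's. (A minor phrasing point: the vanishing of apparent torsion in (ii) is really the observation $[\fg_-,\fg_-]\subset\fg$, so $[\w_{\fg_-},\w_{\fg_-}]$ has no $\fg\upperp$-component; the Jacobi identity is not what is being invoked.)
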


\section{Open questions and problems}
\begin{itemize}
\item Does $H^1_d(\fg_-,\fg\upperp)$ nonzero imply flexibility? If so, can one prove
this directly and in general without going through the (sometimes quite long) 
Cartan algorithm?

\item Give a uniform description of the $F_k$ for all $G/P$'s to obtain uniform
determinations of Fubini rigidity.

\item Determine the class of extrinsically realizable non flat parabolic
geometries modeled on $Flag_{1,2}(\BC^3)$ as some natural class of
parabolic geometries.

\item Apply Cap's machinery to study parabolic geometries having families
of differential operators whose kernel is large but not maximal.

\end{itemize}


\end{document}